\documentclass[12pt]{amsart}

\newif\ifdebug
 \debugfalse

\newif \iffig
\figfalse

\newif \iftable
\tablefalse

\usepackage{sfilip_package_settings}
\usepackage{sfilip_abbreviations}
\usepackage{sfilip_thm_style_long}
\usepackage{verbatim, amscd}
\usepackage{soul}
\newcommand{\ti}[1]{\tilde{#1}}
\newcommand{\vp}{\varphi}
\renewcommand{\leq}{\leqslant}
\renewcommand{\geq}{\geqslant}

\newcommand{\sfloor}[1]{\left\lfloor #1 \right\rfloor}

\NoIndentAfterEnv{theorem}
\NoIndentAfterEnv{proposition}
\NoIndentAfterEnv{corollary}
\NoIndentAfterEnv{lemma}
\NoIndentAfterEnv{problem}
\NoIndentAfterEnv{question}

\title[Nonlinearizable Embeddings]{Nonlinearizable embeddings of elliptic curves in rational surfaces}

\hypersetup{
	pdftitle={Nonlinearizable embeddings of elliptic curves in rational surfaces},	
	pdfauthor={Simion Filip, Valentino Tosatti},	
	pdfsubject={math},		
	pdfkeywords={MSC},	
	pdfnewwindow=true,		
	linkcolor=darkblue,		
	citecolor=darkred,		
	filecolor=darkblue,		
	urlcolor=darkblue,		
	pdfborder={0 0 0},
	breaklinks=true
}

\thanks{Revised \textsc{\today} }

\date{March 2026}

\author{
	Simion Filip
}
\address{
	\parbox{0.5\textwidth}{
		Department of Mathematics\\
		University of Chicago\\
		5734 S University Ave\\
		Chicago, IL 60637\\}
}
\email{{sfilip@math.uchicago.edu}}

\author{
	Valentino Tosatti
}
\address{
	\parbox{0.7\textwidth}{Courant Institute School of Mathematics, Computing, and Data Science\\
New York University\\
251 Mercer St\\
New York, NY 10012\\}
}
\email{{tosatti@cims.nyu.edu}}

\begin{document}

\begin{abstract}
We show that for any smooth cubic in $\bP^2$, there exists a dense $G_\delta$ set of configurations of $9$ distinct points such that blowing up $\bP^2$ at these $9$ points, the strict transform of the cubic is not linearizable and has nontorsion normal bundle.
This answers a problem raised by Ogus in 1975.
\end{abstract}

%
\maketitle
%
\noindent\hrulefill
\tableofcontents
\nointerlineskip
\noindent \hrulefill
\ifdebug
   \listoffixmes
\fi


\section{Introduction}
	\label{sec:introduction}

\subsection{Linearization of complex submanifolds}
Let $X$ be a complex manifold and $Y\subset X$ a compact complex submanifold, with normal bundle denoted by $N_{Y/X}$ and zero section $Z\subset N_{Y/X}$, and denote by $\vp:Y\to Z$ the natural isomorphism. We say that the embedding $Y\subset X$ is linearizable if $Y$ admits a holomorphic tubular neighborhood in $X$, i.e. if there are open sets $Y\subset U\subset X$ and $Z\subset V\subset N_{Y/X}$ and a biholomorphism $\Phi:U\to V$ such that $\Phi(Y)=Z$ and $\Phi|_Y=\vp$. A weaker notion, which will also play a role in our work, is that of formal linearizability, where we only require the existence of a formal isomorphism between the formal completions of $X$ along $Y$ and of $N_{Y/X}$ along $Z$, which extends $\vp$. We will always implicitly assume that all manifolds that we consider are connected.

While every submanifold embedding is linearizable in the $C^\infty$ category, thanks to the classical tubular neighborhood theorem in differential topology, this is not true in our holomorphic setting. Being linearizable imposes nontrivial restrictions on the embedding -- for example $TX|_Y$ must split holomorphically -- and it is thus easy to find examples of non-linearizable embeddings.
For instance, a compact complex submanifold of $\bP^n$ is linearizable if and only if it is a linear subspace \cite{morrow-rossi}.

\subsection{Curves inside surfaces}
From now on, we will work in the simplest nontrivial setting, which is the embedding of a smooth curve $C$ into a smooth compact surface $S$. The selfintersection $(C^2)\in\mathbb{Z}$ equals the degree of the normal bundle $N_{C/S}$.

The case $(C^2)<0$ is completely understood thanks to classical work of Grauert \cite{grauert}, which shows in particular that the embedding is always linearizable. On the other hand, when $(C^2)>0$ the embedding is in general far from being linearizable, as is the case for example for any $C\subset\bP^2$ of degree at least two. The case $(C^2)=0$ (i.e. $N_{C/S}$ topologically trivial) is thus ``borderline'' for the linearization problem, and it will be the topic of this paper.

For a smooth curve $C\subset S$ with $(C^2)=0$, if the genus of $C$ is zero then Savel'ev \cite{savelev} proved that the embedding is always linearizable. We are thus naturally led to the study of the linearization problem for the embedding $C\subset S$ of a smooth genus $1$ curve $C$ into a smooth compact surface $S$ with $(C^2)=0$. As for the case when the genus of $C$ is larger than $1$, not much is known about it, but see e.g. \cite{Mishustin, Thom}.

\subsection{Elliptic curves with degree zero normal bundle}
The study of the linearization problem for elliptic curves embedded in surfaces with degree zero normal bundle was initiated by Arnold in his influential 1976 work \cite{arnold}. There he proved (see \autoref{dioph} below) that the embedding is linearizable provided the normal bundle is Diophantine, an arithmetic condition which is of Lebesgue full measure. He also noted that when the normal bundle is torsion, then in general there are obstructions to linearization, see \autoref{recap} for more details. See also his textbook \cite[\S 27, \S 36]{arnold_book}.

Perhaps the simplest examples of embeddings of an elliptic curve with zero self-intersection inside a compact surface are obtained by the following procedure. Take a smooth cubic curve $E\subset\bP^2$ with $9$ distinct points on it, let $S\to\bP^2$ be the blowup at these $9$ points, and let $C\subset S$ be the strict transform of $E$. This setting was studied by Ogus \cite{Ogus} in 1975, at around the same time as Arnold, where he posed the following problem in \cite[Problem 4.16]{Ogus}:

\begin{problem}
Does there exist a configuration of $9$ distinct points on a smooth cubic curve $E\subset\mathbb{P}^2$ such that the embedding $C\subset S$ of the strict transform $C$ of $E$ into the blowup $S$ of $\mathbb{P}^2$ at these $9$ points has non-torsion normal bundle $N_{C/S}\in\mathrm{Pic}^0(E)$ and the embedding is not linearizable?
\end{problem}
This well-known problem was also recently reiterated in \cite[\S 1.5]{LTT} and in \cite[Question 1.8]{KoikeUehara2019_A-gluing-construction-of-K3-surfaces}, but there has been no progress since it was first posed.

Our main theorem gives in particular a positive answer to this problem:
\begin{theorem}\label{main}
For any smooth cubic curve $E\subset\mathbb{P}^2$, there exists a dense $G_\delta$ subset of the set of $9$ distinct point configurations on $E$ for which the conclusion of Ogus's problem holds: $N_{C/S}$ is non-torsion and the embedding $C\subset S$ is not linearizable.
\end{theorem}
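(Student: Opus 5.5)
Fix the cubic $E$ with group law and origin at a flex. For points $p_1,\dots,p_9\in E$ the normal bundle of $C$ in $S$ is $N_{C/S}\cong \mathcal{O}_E(3)\otimes\mathcal{O}_E(-p_1-\dots-p_9)$. Under the identification $\mathrm{Pic}^0(E)\cong E$ it corresponds to $-(p_1+\dots+p_9)$. So $N_{C/S}$ is torsion exactly when $\sum p_i$ is torsion, and this is a countable union of proper closed analytic subsets of the configuration space $\mathcal{C}\subset E^9$. Its complement is therefore a dense $G_\delta$. The plan is to show that the set of non-linearizable configurations also contains a dense $G_\delta$, and then intersect the two, using Baire category on $\mathcal{C}$, which is an open subset of a compact complex manifold.

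For the $G_\delta$ part I would follow the standard Ueda/Arnold scheme. Linearizability of $C\subset S$ is equivalent to the existence of a holomorphic foliation near $C$ having $C$ as a leaf with unitary flat holonomy. This is in turn equivalent to the existence of a convergent solution of the recursively defined cohomological equations. Their obstructions at order $n$ lie in $H^1(C,N^{-n})$, and the solutions have size controlled by $\|N^n-\mathcal{O}\|^{-1}$.

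First, I would express non-linearizability as a countable intersection of open sets. For each $R>0$, let $U_R$ be the set of configurations such that no linearizing map exists on a tube of radius $R$ with bounded norm. The cleaner route is an explicit quantity: let $a_n(p)$ be the norm of the $n$-th coefficient of the formal linearization (or of the Ueda obstruction), normalized in a fixed way. Each $a_n$ is continuous on the non-torsion locus, and on a neighborhood of each non-resonant configuration. Non-linearizability follows from $\limsup_n |a_n|^{1/n}=\infty$, and the set $\{\sup_{n\ge N}|a_n|^{1/n}>k\}$ is open for all $k,N$. This realizes the bad set as $\bigcap_k\bigcap_N(\text{open})$, which is a $G_\delta$.

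Second, and this is the real content, I would prove density of each of these open sets. The idea is a Liouville/small-divisor construction near torsion points. Near a configuration with $N$ of exact order $m$, the Ueda obstruction of order $m$ can be nonzero for suitable generic points. I expect to show this by a direct computation on the blowup: the first nontrivial Ueda class is a nonvanishing algebraic function of the $p_i$ restricted to the torsion locus, and one would exhibit one configuration where it is nonzero. One example suffices, because the locus is irreducible. Perturbing to a nearby non-torsion $N$ at distance $\epsilon$ from the $m$-torsion point, the order-$m$ coefficient then behaves like $c/\epsilon$ with $c\ne0$. Choosing $\epsilon$ tiny relative to $m$ makes $|a_m|^{1/m}>k$. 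Torsion configurations of arbitrarily large order are dense, so every open set contains such perturbations, which gives density.

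The main obstacle is this second step. It requires uniform control of how the order-$m$ coefficient blows up as $N$ approaches an $m$-torsion point, with the nonvanishing Ueda class surviving continuously. It also requires checking that this class is not identically zero on the torsion locus for infinitely many $m$, which needs an explicit nonvanishing computation in the $9$-point blowup. Finally, I would intersect the resulting $G_\delta$ with the non-torsion $G_\delta$ to obtain \autoref{main}.
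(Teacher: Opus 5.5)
\textbf{Genuine gap: the density step.} Your $G_\delta$ construction is essentially the paper's $\{\rho=0\}$ in disguise. It does need the family preliminary normal form (\autoref{siu}) and the meromorphic family formal linearization, so that $a_n(p)$ is well defined in $p$, plus a lower-semicontinuous convention on the poles $P_{\le n}$. The gap is in the density step, and the mechanism you propose there cannot work as stated.

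By \autoref{neeman}, every configuration with $N_{C/S}$ of exact order $m$ is a Halphen pencil of index $m$ with $mC$ a fibre. Locally the fibration is $f=h_j^m$, where the $h_j$ are defining functions of $C$ whose transition functions are $m$-th roots of unity. So $C$ has Ueda type $\infty$, and every Ueda class vanishes identically on the torsion locus. In the normal form, this means the transverse resonant numerators $a^{(m-1)}_{m,k}$ vanish along $P_{m,k}$, so the transverse coefficient does not blow up there.

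The same example refutes your reformulation ``linearizable iff there is a foliation with $C$ as a leaf and unitary flat holonomy''. The Halphen fibration is such a foliation even when the pencil is non-isotrivial, and then $C\subset S$ is not linearizable (\autoref{isotr}). What remains is the tangential obstruction, which measures how the complex structure of the nearby fibres varies. Requiring it to be nonzero already at order $m$ is a first-order non-degeneracy condition on the pencil at $mC$. That is strictly stronger than non-isotriviality, and you give no argument for it.

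Finally, ``one example suffices because the locus is irreducible'' is wrong. For fixed $E$, the order-$m$ locus is the disjoint union of the components $D_L$ with $L$ of order $m$. You would need non-vanishing on components $D_L$ with $L$ dense in $\mathrm{Pic}^0(E)$.

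\textbf{The missing input and how to finish.} What you actually need is a dense set of torsion configurations that are not even formally linearizable. This is \autoref{dense} (Sch\"utt's argument that the isotrivial locus is a proper closed subvariety of each $\mathcal{T}_n$, via Jacobian fibrations and Weierstra\ss\ forms), combined with \autoref{isotr}. With this in hand you never need to locate the order at which the obstruction appears.

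The paper finishes with a normal-families argument. Suppose that near such a $p_0$ one had $\sup_{z\in A}|c_n(p,z)|^{1/n}\le K$ and $\sup_{z\in A}|d_n(p,z)|^{1/n}\le K$ for all $n$ and all non-resonant $p$. Then the linearizations converge uniformly on $A\times D^\circ(1/K)$, and a subsequential limit linearizes at $p_0$, a contradiction. This is exactly why $\rho$ is built with $\sup$ rather than $\limsup$. Taking $p_0$ of exact order $\ge N$, so that $p_0\notin P_{\le N-1}$ and the finitely many low-order terms stay bounded nearby, shows directly that your open sets $\{\sup_{n\ge N}|a_n|^{1/n}>k\}$ are dense.

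Your pole idea can also be salvaged with the same input. Suppose all resonant numerators at orders $m,2m,\dots$ vanished identically on the component $D_L$ through $p_0$. Then every $c_n,d_n$ would be holomorphic at $p_0$ and would give a formal linearization there, contradicting \autoref{isotr}. So at the first order $jm$ where this fails, $c_{jm}$ or $d_{jm}$ has a genuine simple pole along $D_L$ near $p_0$, which gives the blow-up you want.
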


About the ``dense $G_\delta$'' property: once a smooth cubic $E$ is given, the set of $9$ distinct point configurations on $E$ is an open subset $E^{(9)}\subset E^9$ of the Cartesian self-product, which we endow with its usual Euclidean topology.

\subsection{Formal Principles}
We now discuss an application of \autoref{main} to the formal principle. The formal principle, in the case of smooth curves inside surfaces, is the following property: given a smooth curve $C$ embedded in a smooth surface $S$, we say that the formal principle holds if whenever we have another such embedding $C'\subset S'$ and a formal isomorphism between the formal completions of $S$ along $C$ and of $S'$ along $C'$, then the germs of the two embeddings are biholomorphic. The papers \cite{grauert} and \cite{CG} proved that the formal principle holds whenever $(C^2)\neq 0$, but this is not the case when $(C^2)=0$. Indeed, if an embedding $C\subset S$ is formally linearizable but not linearizable then clearly the formal principle fails, and Arnold \cite{arnold} found germs of embeddings of an elliptic curve in a (noncompact) surface which have this property, thus showing that the formal principle can indeed fail (see also \cite{IP2}). The formal principle also makes sense in higher dimensions (and allowing the varieties to be singular), and the question of when exactly it is valid or not is a subject of much investigation, see e.g. \cite{griffiths}, \cite{hirschowitz},  \cite{kosarew}, \cite{steinbiss}, \cite{ABT}, \cite{hwang}, \cite{honghwang}, \cite{gongstolfp}, \cite{koike_new} and references therein.

A variant of the formal principle, called ``projective formal principle'', was recently introduced by Pereira--Thom in \cite[Defn.1.2]{PT}. Here one considers a compact smooth curve $C$ embedded in a projective surface $S$ (not necessarily smooth), and says that the projective formal principle holds for this embedding if whenever we have another such embedding $C'\subset S'$ (with $S'$ projective), and  a formal isomorphism between the formal completions of $S$ along $C$ and of $S'$ along $C'$, then the germs of the two embeddings are biholomorphic.

Up to now there were no known examples where the projective formal principle fails. As a direct consequence of \autoref{main}, we can show that such examples do exist:

\begin{corollary}\label{coro}
There exist embeddings $C\subset S$ of a smooth curve into a smooth projective surface which do not satisfy the projective formal principle.
\end{corollary}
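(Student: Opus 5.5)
The plan is to deduce \autoref{coro} from \autoref{main}, applied to the blowup construction. First pick a smooth cubic $E\subset\bP^2$ and a configuration of $9$ distinct points on $E$ for which \autoref{main} applies. Let $S$ be the blowup of $\bP^2$ at these points and $C\subset S$ the strict transform of $E$. Then $S$ is a smooth projective surface, $N_{C/S}$ is non-torsion of degree zero, and the embedding $C\subset S$ is not linearizable. We want a second embedding $C'\subset S'$, with $S'$ projective, that is formally isomorphic to $C\subset S$ but has a different germ. We will take $S'$ to be a projective model of the normal bundle, namely $S'=\bP(N_{C/S}\oplus\mathcal{O}_C)$, the ruled surface over $C$. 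Let $C'\subset S'$ be the zero section. Then $S'$ is a smooth projective surface, since it is a $\bP^1$-bundle over an elliptic curve, and a neighborhood of $C'$ in $S'$ is biholomorphic to a neighborhood of the zero section of $N_{C/S}$. Hence the germ of $C'\subset S'$ is the linear model.

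Next we show that the formal completions agree, i.e. that $C\subset S$ is formally linearizable. The standard tool is the obstruction theory of the formal neighborhood. The obstructions to extending a formal linearization from order $k$ to order $k+1$ lie in the groups $H^1(C, T_C\otimes N^{-k})$ and $H^1(C, N_{C/S}\otimes N^{-k-1})$, with $k\geq 1$ and $N=N_{C/S}$, together with the analogous $H^1(C,N^{-k})$-type groups. Since $C$ is elliptic, $T_C\cong\mathcal{O}_C$. Since $N$ is non-torsion of degree $0$, every line bundle $N^{j}$ with $j\neq 0$ is a nontrivial degree zero bundle, and all of these cohomology groups vanish. This is presumably recorded in \autoref{recap}, or follows from Arnold's formal analysis. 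Therefore a formal isomorphism between $\widehat{S}_C$ and $\widehat{N}_Z\cong \widehat{S'}_{C'}$ exists extending $\vp$. Because $C\subset S$ is not linearizable, its germ is not biholomorphic to that of $C'\subset S'$. The two embeddings are formally isomorphic, live in smooth projective surfaces, and have non-isomorphic germs, so the projective formal principle fails.

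The main point to check carefully is that non-linearizability in the sense of the paper, a biholomorphism extending $\vp$, really rules out any biholomorphism of germs. A germ biholomorphism $\Psi$ from $(S,C)$ to $(S',C')$ would restrict to an automorphism $\psi$ of $C$ (identifying $C'=C$). Under $\psi$, $N_{C/S}$ pulls back to $N_{C'/S'}$, which equals $N_{C/S}$. We then compose $\Psi$ with the bundle automorphism of $N$ covering $\psi^{-1}$: the isomorphism $\psi^*N\cong N$ induces a linear map of total spaces, which preserves the zero section, and this linear map is itself a biholomorphism of neighborhoods of $Z$. The composite is a linearization extending $\vp$, which contradicts \autoref{main}. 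The formal vanishing step is routine given non-torsion-ness, so this normalization argument is the only delicate part.
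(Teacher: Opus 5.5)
Your proposal is correct and follows the paper's proof. You take a configuration from \autoref{main}, deduce formal linearizability from non-torsion-ness, and compare with the zero section of $S'=\mathbb{P}(N_{C/S}\oplus\mathcal{O})$; the paper simply cites Arnold's \autoref{formal} for the formal step, which your argument via $H^1(C,N^{-k})=0$ for $k\geq 1$ reproves, and your composition of a germ biholomorphism with a linear bundle map covering $\psi^{-1}$, reducing to one that restricts to $\varphi$, is a valid detail the paper leaves implicit.
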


\subsection{Ideas from the proofs}
We note first that a standard argument to disprove linearizability is via ``materialization of resonances'', see the discussion in \autoref{ssec:Materialization-of-resonances} below. However, this method does not work in the projective case, as we explain in \autoref{sssec:No-materialization-of-resonances-in-projective-case} that such a materialization never occurs.

\paragraph{Torsion normal bundle and fibrations}
The starting point for us is the well-known observation that when our embedding $C\subset S$ has torsion normal bundle, linearizability is completely understood: in this case $S$ has an elliptic fibration with $C$ as one of the fibers (possibly with multiplicity), known as Halphen pencils in the algebro-geometric literature, and linearizability holds if and only if this fibration is isotrivial. Since torsion points are dense in $\mathrm{Pic}^0(E)$, and since these isotrivial fibrations should be quite rare, one can reasonably expect to have a dense subset of $E^9$ ($9$-tuples of points on $E$) for which the corresponding embedding is nonlinearizable and has torsion normal bundle. We show that this expectation is indeed correct.

\paragraph{Formal linearization}
On the other hand, when the normal bundle of $C\subset S$ is nontorsion, Arnold proved that the embedding is formally linearizable, by first putting the embedding in a preliminary normal form (this step uses a deep theorem of Siu \cite{Siu}), and then constructing the linearization formally order by order. The fact that $N_{C/S}$ is nontorsion means that there is no division by zero when solving the corresponding homological equation. Of course, one cannot conclude from this that the formal linearization is convergent, due to the presence of small divisors.
In our setting, we first implement a family version of Arnold's setup, where we allow the $9$ points to vary in some small ball $B\subset E^9$, and obtain a family preliminary normal form, and from that a family formal linearization for those points in $B$ where the corresponding normal bundle is nontorsion.

\paragraph{Radius function}
The formal linearization has coefficients which are meromorphic in $B$ and may have poles at the torsion points. One key innovation then is to introduce a ``radius function'' $\rho:B\to[0,+\infty]$ which is upper semicontinuous, vanishes at those parameters where the formal linearization is divergent, and is strictly positive at a nontorsion point if and only if the formal linearization is convergent.
Given this, we conclude as follows: since the nonlinearizable torsion points are dense in $B$, as mentioned earlier, and since $\rho$ is upper semicontinuous, the set $\{\rho=0\}$ is a dense $G_\delta$. On the other hand, the torsion points are $F_\sigma$ with empty interior, by Baire, and so the nontorsion points where $\rho$ vanishes must also be a dense $G_\delta$, as desired.

As for the radius function, one would naively want to define it as the radius of convergence of the formal linearization. This does not quite work, since in general this function would not be upper semicontinuous. We end up defining $\rho$ first away from all resonances, by imitating the formula for the radius of convergence but replacing a limsup by a sup, and then taking its upper semicontinuous envelope across the resonances.

\paragraph{Analogy with 1d holomorphic maps}
There is a well-known analogy, going back to \cite{arnold}, between linearization of fixed points of holomorphic maps $z\mapsto \lambda z + O(z^2)$ with $|\lambda|=1$, and linearizability of elliptic curves with degree zero normal bundle.
We refer to \autoref{ssec:Materialization-of-resonances} for more on this, but note that our technique of using a radius function and semicontinuity to show the existence of nonlinearizable examples with nontorsion normal bundle is inspired by the usage of a similar method by Yoccoz in the Siegel problem for holomorphic maps, see \cite[Thm 11.14 and Lemma 11.15]{Milnor2006_Dynamics-in-one-complex-variable}.
Namely, allowing the parameter $\lambda$ to vary in the entire unit disk $|\lambda|\leq 1$, linearizability for $|\lambda|<1$ is automatic, while for $|\lambda|=1$ one can deduce linearizability in a set of full Lebesgue measure from the upper semicontinuity of the ``conformal radius'' function. Observe though that in our setting semicontinuity is used in the opposite direction, to prove nonlinearizability.

\paragraph{Paper outline}
The paper is organized as follows. In \autoref{sec2} we collect some preliminary results from the literature, explain Arnold's construction of elliptic curve neighborhoods via gluing maps, and we prove the existence of a preliminary normal form in families. In \autoref{sec3}, we upgrade the preliminary normal form to a family formal linearization, with meromorphic dependence on the parameter. The radius function is introduced in \autoref{sec4}, where its key properties are proved. Section \ref{sec5} contains algebro-geometric arguments involving Halphen pencils, to show that a dense subset of $E^9$ has corresponding embeddings which are nonlinearizable and have torsion normal bundle. The proof of \autoref{main} and \autoref{coro} are also given there. Lastly, in \autoref{sec:Complements-and-Questions} we formulate several natural questions, one related to the Brjuno condition, and another related to complete Ricci-flat \Kahler metrics in $S\backslash C$.


\subsection*{Acknowledgments}
We are grateful to Igor Dolgachev and Matthias Sch\"utt for many discussions about Halphen pencils, with special thanks to Matthias for providing us with the proof of \autoref{dense}. Thanks also to Serge Cantat, Yitwah Cheung, John Lesieutre, Keiji Oguiso, Ulf Persson, Fr\'ed\'eric Touzet, and Junsheng Zhang for useful communications. Serge Cantat also provided us with useful feedback on an earlier draft.
This material is based upon work partially supported by the National Science Foundation under Grants No. DMS-2005470, DMS-2305394 (first-named author) and DMS-2404599 (second-named author).


\section{A preliminary normal form}\label{sec2}

\subsection{Linearizability}
Let $Y\subset X$ a compact complex submanifold embedding, with normal bundle denoted by $N_{Y/X}$. In the introduction we defined this embedding to be linearizable if $Y$ admits a holomorphic tubular neighborhood in $X$. To explain the terminology, it is elementary to see that linearizability is equivalent to the following property: $Y$ can be covered by finitely many charts $U_i\subset X$, such that in each chart $U_i$ there are local holomorphic coordinates $(z^{(i)},w^{(i)}):=(z^{(i)}_1,\dots, z^{(i)}_k,w^{(i)}_1,\dots, w^{(i)}_{n-k})$ ($n=\dim X, k=\dim Y)$ with $U_i\cap Y=\{w^{(i)}_1=\cdots =w^{(i)}_{n-k}=0\}$ and for each $i\neq j$ the chart transition maps are given by
\begin{equation}
	(z^{(i)},w^{(i)})\mapsto (z^{(j)},\lambda_{ij}(z^{(i)},w^{(i)})\cdot w^{(i)}),
\end{equation}
where $\lambda_{ij}:U_i\cap U_j\to GL(n-k,\mathbb{C})$ is the cocycle defining $N_{Y/X}$.
\subsection{Useful facts from the literature}\label{recap}
Let $E\subset\mathbb{P}^2$ be a smooth cubic curve, $p=(p_1,\dots,p_9)$ be a configuration of $9$ distinct points on $E$, $\pi:S\to\mathbb{P}^2$ the blowup of $\mathbb{P}^2$ at $p$, and $\iota:C\hookrightarrow S$ the embedding of the strict transform of $E$ into $S$. Then $(C^2)=0$, so the normal bundle $N_{C/S}$ has degree zero.

Let us recall the following basic facts:

\begin{proposition}{\cite[Theorem 4.2.1]{arnold}}\label{formal}
Suppose $N_{C/S}\in\mathrm{Pic}^0(C)$ is not torsion. Then the embedding $C\subset S$ is formally linearizable.
\end{proposition}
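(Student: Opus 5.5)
The plan is Arnold's standard argument: build the formal isomorphism order by order, showing that each obstruction lives in a cohomology group that vanishes exactly because $N_{C/S}$ is nontorsion. Write $N=N_{C/S}$, and let $\widehat S$ be the formal completion of $S$ along $C$, with $S_k$ the $k$-th infinitesimal neighborhood, i.e. the ringed space $(C,\mathcal O_S/\mathcal I^{k+1})$, where $\mathcal I$ is the ideal sheaf of $C$. Since $C$ is a smooth divisor, $\mathcal I^k/\mathcal I^{k+1}\cong N^{-k}$. The goal is to construct compatible isomorphisms $\Phi_k:S_k\to N_k$, where $N_k$ is the $k$-th neighborhood of the zero section in the total space of $N$, each restricting to $\vp$ on $C$. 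Passing to the limit then gives the formal linearization.

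\textbf{Step 1 (first order).} I would first show that $S_1\cong N_1$. The first neighborhood is determined by the extension class of
\[
0\to N^*\to \Omega^1_S|_C\to\Omega^1_C\to 0 .
\]
More directly, $S_1$ is isomorphic to $N_1$ if and only if a certain class in $H^1(C,T_C\otimes N^{*})$ vanishes. Since $T_C\cong\mathcal O_C$ for an elliptic curve, this group is $H^1(C,N^{-1})$. Because $N^{-1}$ has degree $0$ and is nontrivial (nontorsion implies nontrivial), Riemann--Roch and Serre duality give $H^0(C,N^{-1})=0$ and hence $H^1(C,N^{-1})=0$.

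\textbf{Step 2 (inductive step).} Suppose $\Phi_k:S_k\cong N_k$ extending $\vp$ has been constructed. Extensions of $S_k$ to a $(k+1)$-st order thickening with given conormal data form a torsor, and obstructions to lifting $\Phi_k$ lie in groups built from $\mathcal I^{k+1}/\mathcal I^{k+2}\cong N^{-(k+1)}$. Working in Čech terms, I would cover $C$ by discs $U_i$ with coordinates $(z_i,w_i)$ in which the transition maps agree with the linear ones $(z,w)\mapsto(z+c_{ij},\lambda_{ij}w)$ modulo $w^{k+1}$. The degree-$(k+1)$ discrepancy is then a $1$-cocycle with values in two pieces:
\begin{itemize}
\item the tangential component, in $T_C\otimes N^{-(k+1)}\cong N^{-(k+1)}$;
\item the normal component, in $N\otimes N^{-(k+1)}=N^{-k}$.
\end{itemize}
Changing coordinates by $z_i\mapsto z_i+a_i w_i^{k+1}$ and $w_i\mapsto w_i+b_i w_i^{k+1}$ modifies this cocycle by a coboundary. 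So the order-$(k+1)$ term can be removed exactly when the classes in $H^1(C,N^{-(k+1)})$ and $H^1(C,N^{-k})$ vanish. Since $N$ is nontorsion, $N^{-m}$ is a nontrivial degree-zero bundle for every $m\geq1$, so $H^0=0$ and therefore $H^1(C,N^{-m})=0$, as $h^0=h^1$ for degree zero on genus one. The new $\Phi_{k+1}$ agrees with $\Phi_k$ modulo $\mathcal I^{k+1}$, so the sequence is compatible.

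\textbf{Main obstacle.} The step requiring care is the bookkeeping in Step 2: one must verify that the order-$(k+1)$ discrepancy really is a cocycle with values in exactly these twisted line bundles, including the cross terms coming from the normal-bundle cocycle $\lambda_{ij}$ acting on the coefficients. Once that identification is made, everything reduces to the single vanishing statement $H^1(C,N^{m})=0$ for all $m\neq0$, which is precisely where the nontorsion hypothesis is used. (When $N$ is torsion of order $m$, the group $H^1(C,N^{-m})\cong H^1(C,\mathcal O_C)\neq0$, and this is the origin of the obstructions mentioned above.)
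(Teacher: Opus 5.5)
Your proposal is correct, but it reaches the statement by a different route from the one the paper relies on. The paper does not reprove this proposition; it cites Arnold, and Section~\ref{sec3} carries out Arnold's argument in families. There, Siu's theorem is applied on the $\mathbb{Z}$-cover to obtain the analytic preliminary normal form $f(z,r)=(qz(1+rb),\lambda r(1+ra))$ of Proposition~\ref{siu}. The order-$n$ terms are then killed by solving the homological equation $\lambda^n\tilde c_n(qz)-\tilde c_n(z)=-a_n(z)$ (and likewise for $\tilde d_n$) coefficient by coefficient, $\tilde c_{n,k}=-a_{n,k}/(\lambda^nq^k-1)$. The denominators are nonzero exactly because $N$ is nontorsion. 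You instead argue intrinsically with \v{C}ech cochains on a disc cover and reduce each step to $H^1(C,N^{-m})=0$ for $m\geq 1$. The underlying mechanism is the same: the homological equation is precisely the coboundary equation for $H^1(C,N^{-n})$ on $\mathbb{C}^\times/q^{\mathbb{Z}}$ covered by a fundamental annulus. Your bookkeeping also matches the paper's. You put the tangential term at order $k+1$ in $N^{-(k+1)}$ and the normal term at order $k+1$ in $N^{-k}$. The paper pairs $b_n$ (the $r^n$-term of the $z$-component) with $a_n$ (the $r^{n+1}$-term of the $r$-component), both carrying the multiplier $\lambda^n$. Your route buys economy: for a purely formal statement you never need Siu's deep theorem, only local coordinates on discs and Riemann--Roch on $C$. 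The paper's route buys explicit control. Once an analytic normal form is fixed, the formal linearization is unique, and its Laurent coefficients $c_{n,k}(w),d_{n,k}(w)$ are given by explicit formulas that are meromorphic in the parameter with poles only along the resonance divisors. This yields concrete functions $c_n(w,z),d_n(w,z)$ on a fixed annulus $A$, which is exactly what is needed to define the radius function $\rho$ and prove its upper semicontinuity in Section~\ref{sec4}. The abstract vanishing of $H^1$ alone does not give that.
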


\begin{proposition}{\cite[Lemma I.6.2 and Theorem II.5.1]{Neeman1989_Ueda-theory:-theorems-and-problems}}
		\label{neeman}
	The normal bundle $N_{C/S}\in\mathrm{Pic}^0(C)$ is torsion of order $\ell\geq 1$ if and only if there is an elliptic fibration $f:S\to \mathbb{P}^1,$ given by the linear system $|-\ell K_S|$, such that $\ell C$ equals the scheme-theoretic fiber of $f$ over some point in $\mathbb{P}^1$.
\end{proposition}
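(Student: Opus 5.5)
The plan is to prove the two implications separately, working with the rational elliptic surface structure of $S$. Throughout I use $C\sim -K_S$: indeed $-K_S=3H-\sum E_i$, which is the class of the strict transform of the cubic, and so $N_{C/S}=\mathcal{O}_S(C)|_C=\mathcal{O}_S(-K_S)|_C$.

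\textbf{($\Leftarrow$)} Suppose there is an elliptic fibration $f:S\to\mathbb{P}^1$ with $\ell C=f^*(\mathrm{pt})$. Then $\mathcal{O}_S(\ell C)=f^*\mathcal{O}_{\mathbb{P}^1}(1)$. Restricting to $C$ gives $N_{C/S}^{\otimes\ell}=\mathcal{O}_S(\ell C)|_C$, which is trivial, since $f^*\mathcal{O}(1)$ has a section nonvanishing near $C$ (pulled back from a point away from $f(C)$). Hence $N_{C/S}$ is torsion of order dividing $\ell$. For the order to equal $\ell$ exactly, I argue as follows. If $N^{\otimes m}$ were trivial with $m<\ell$, the forward direction below would produce a pencil $|mC|$. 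Its member $mC$ and the divisor $\ell C$ would then be fibers of two fibrations whose fiber classes are proportional. Both fibrations are given by the same map, namely the Stein factorization of $|{-}kK_S|$ for large $k$, so $\ell C$ would be a non-reduced multiple of a fiber. This contradicts $\ell C$ being a full fiber of $f$, since then $C$ is a multiple fiber of multiplicity exactly $\ell$.

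\textbf{($\Rightarrow$)} Suppose $N_{C/S}$ has order exactly $\ell$. The key step is to show that $h^0(S,\ell C)\ge 2$, with a section not vanishing identically on $C$. I use the exact sequences
\[
0\to\mathcal{O}_S((k-1)C)\to\mathcal{O}_S(kC)\to N^{\otimes k}\to 0
\]
for $k=1,\dots,\ell$. On the rational surface $S$ we have $H^1(\mathcal{O}_S)=0$. For $1\le k<\ell$ the bundle $N^{\otimes k}$ is a nontrivial degree $0$ bundle on an elliptic curve, so $H^0(N^{\otimes k})=H^1(N^{\otimes k})=0$. Induction then gives $H^1(\mathcal{O}_S(kC))=0$ and $h^0(\mathcal{O}_S(kC))=1$ for $k<\ell$. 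At $k=\ell$ the bundle $N^{\otimes\ell}$ is trivial, and $H^0(N^{\otimes \ell})=\mathbb{C}$ lifts because $H^1(\mathcal{O}_S((\ell-1)C))=0$. Hence $h^0(\ell C)=2$, and the pencil $|\ell C|=|{-}\ell K_S|$ contains $\ell C$ together with a member disjoint from $C$.

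Since $(\ell C)^2=0$ and two members of the pencil are disjoint, the pencil is base-point free, so it defines a morphism $f:S\to\mathbb{P}^1$. By adjunction the general fiber has genus $1$; connectedness of fibers follows because $\ell C$ is one fiber and $C$ is connected, so Stein factorization is trivial. By construction the scheme-theoretic fiber over $f(C)$ is $\ell C$.

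The main obstacle is the bookkeeping of the exact order in ($\Leftarrow$), namely excluding that $N$ has smaller order than the multiplicity. I would handle it via the uniqueness of the fibration as described there, or simply observe that the forward construction yields $\ell$ equal to the order and that multiplicity is an invariant of the fibration.
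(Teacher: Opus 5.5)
The paper gives no proof of this proposition; it quotes it from Neeman's memoir, so there is no internal argument to compare with. Your proposal is the standard self-contained proof. The exact sequences $0\to\mathcal{O}_S((k-1)C)\to\mathcal{O}_S(kC)\to N^{\otimes k}\to 0$ together with $H^1(\mathcal{O}_S)=0$ give $h^0(\ell C)=2$, with a member disjoint from $C$, hence a base-point-free pencil. This uses only $q(S)=0$ and $C\sim -K_S$, and your cohomology computation is correct. Two steps need repair.

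\textbf{Connectedness in $(\Rightarrow)$.} The inference ``$\ell C$ is one fiber and $C$ is connected, so the Stein factorization is trivial'' is not valid. Write $f=\phi\circ g$ with $g\colon S\to B$ having connected fibers and $\phi\colon B\to\mathbb{P}^1$ finite of degree $d$. A connected fiber over $f(C)$ only says that $\phi^{-1}(f(C))$ is a single point $b$, i.e.\ $\phi$ is totally ramified there. For instance, an elliptic fibration composed with $t\mapsto t^2$ has a connected fiber over $0$ but disconnected general fibers.

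The fix uses the exact order of $N$. From $\phi^*(f(C))=d\,b$ you get $\ell C=d\,g^*(b)$. Since $g^*(b)$ is supported on $C$, this gives $g^*(b)=eC$ with $\ell=de$. Then $\mathcal{O}_S(eC)\cong g^*\mathcal{O}_B(b)$ restricts trivially to $C$, so $N^{\otimes e}$ is trivial, which forces $e=\ell$ and $d=1$. Alternatively, $B\cong\mathbb{P}^1$ because $q(S)=0$, and then $h^0(\ell C)\geq d+1$ contradicts $h^0(\ell C)=2$.

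\textbf{Exact order in $(\Leftarrow)$.} Your argument that both fibrations are ``the Stein factorization of $|-kK_S|$ for large $k$'' is right in spirit but not actually proved; you should at least take $k$ divisible by $\ell$ and justify why $f$ arises this way. A cleaner route avoids uniqueness altogether:
\begin{itemize}
\item Since $f$ has connected fibers, $f_*\mathcal{O}_S=\mathcal{O}_{\mathbb{P}^1}$, so $h^0(\ell C)=h^0(\mathbb{P}^1,\mathcal{O}(1))=2$. Under the reading ``$f$ is given by the complete linear system $|-\ell K_S|$'' this is simply part of the hypothesis.
\item Suppose $N$ had exact order $m<\ell$, so $m\mid\ell$. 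Your $(\Rightarrow)$ computation gives $H^0(mC)=\langle s_0,s_1\rangle$ with $s_0/s_1$ a nonconstant meromorphic function.
\item The monomials $s_0^a s_1^{\ell/m-a}$, for $0\le a\le \ell/m$, are then $\ell/m+1\geq 3$ linearly independent sections of $\mathcal{O}_S(\ell C)$. This is a contradiction.
\end{itemize}

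With these two repairs your argument is a complete proof of the statement.
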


Such an elliptic fibration is called a Halphen pencil of index $\ell$. It is known (see e.g. \cite[Prop.2.2]{cantat_dolgachev}) that there is a birational morphism $\pi:S\to\mathbb{P}^2$. Projecting the elliptic fibration down to $\bP^2$, we obtain an irreducible pencil of curves of degree $3\ell$ in $\mathbb{P}^2$ passing through $9$ base points with multiplicity $\ell$, and with one fiber which is a cubic with multiplicity $\ell$, and $\pi$ is the blowup of these $9$ points.

\begin{proposition}{\cite[Proposition 5.1]{LTT}}\label{isotr}
Suppose $N_{C/S}\in\mathrm{Pic}^0(C)$ is torsion. Then the following are equivalent:
\begin{itemize}
\item[(1)] $C\subset S$ is linearizable,
\item[(2)] $C\subset S$ is formally linearizable,
\item[(3)] the elliptic fibration $f$ is isotrivial.
\end{itemize}
\end{proposition}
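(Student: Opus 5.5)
The equivalence splits into the trivial implication (1)$\Rightarrow$(2), together with (2)$\Rightarrow$(3) and (3)$\Rightarrow$(1). Throughout we use \autoref{neeman}: since $N_{C/S}$ is torsion of order $\ell$, there is an elliptic fibration $f:S\to\mathbb{P}^1$ with $f^*(\text{pt})=\ell C$.

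\textbf{(3)$\Rightarrow$(1).} Suppose $f$ is isotrivial. Near the multiple fiber $\ell C$, take the local base-change of order $\ell$, $t=s^\ell$, over a small disc $\Delta$ around $f(C)$, and then normalize. This produces an unramified cyclic $\mathbb{Z}/\ell$-cover $\tilde U\to U$ of a neighborhood $U$ of $C$, with a fibration $\tilde U\to\tilde\Delta$ that is smooth near the central fiber $\tilde C\to C$. Isotriviality means the $j$-invariant is constant, so after shrinking, $\tilde U\to\tilde\Delta$ is a holomorphically trivial family of elliptic curves. Hence $\tilde U\cong \tilde C\times\tilde\Delta$, and the deck action is
\[(x,s)\mapsto(x+\tau,\zeta s),\]
where $\tau$ is an $\ell$-torsion translation and $\zeta$ is a primitive $\ell$-th root of unity. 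Taking the quotient identifies $U$ with a neighborhood of the zero section in the flat line bundle $(\tilde C\times\mathbb{C})/(\mathbb{Z}/\ell)$, which is $N_{C/S}$. This gives linearizability.

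A small point must be checked here: the translation $\tau$ could a priori be composed with an automorphism of $\tilde C$. However, the action is free near $\tilde C$ and must fix the fiber direction, and the quotient of $\tilde C$ must be $C$. So the action on $\tilde C$ is a fixed-point-free automorphism of an elliptic curve, which is a translation.

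\textbf{(2)$\Rightarrow$(3).} This is the main step. A formal linearization identifies the formal neighborhood of $C$ in $S$ with that of the zero section of $N:=N_{C/S}$. In $N$, the neighborhood carries the function $w^\ell$, where $w$ is the fiber coordinate with unitary flat transition functions. Since $N^{\otimes\ell}$ is trivial, $w^\ell$ is a well-defined function, and it vanishes to order exactly $\ell$ on the zero section; its level sets are all isomorphic to $C$.

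On the other side, $f-f(C)$ (in a local coordinate $t$ on $\mathbb{P}^1$) is a function on a neighborhood of $C$ vanishing to order exactly $\ell$ along $C$. I would compare the two formally. Functions on the formal neighborhood of the zero section of $N$ are power series $\sum_k a_k w^k$ with $a_k\in H^0(C,N^{-k})$, which is nonzero only when $\ell\mid k$, and then the coefficient is a constant. So every formal function is a formal power series in $u=w^\ell$. In particular, the formal image of $t$ is $g(u)$ with $g'(0)\neq 0$. Therefore $u=g^{-1}(t)$, and the formal fibration $t$ agrees, up to reparametrization of the base, with the linear one $u$. It then remains to deduce that the $j$-invariant of the fibers $f^{-1}(t)$ is constant. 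The map $t\mapsto j(f^{-1}(t))$ is holomorphic near $f(C)$, and its Taylor coefficients at $f(C)$ are determined by finite-order jets of the neighborhood. This can be made precise via the Kodaira--Spencer/period map of the family after the order-$\ell$ base change, which is determined by the formal completion. In the linear model that map is constant, so all derivatives of $j$ vanish at $f(C)$. Hence $j$ is locally constant, and therefore constant on $\mathbb{P}^1$, so $f$ is isotrivial.

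I expect the obstacle to be making rigorous that the formal completion determines the jets of the period map. I would handle this by working on the étale $\ell$-cover, where the fibration is smooth, and noting that the period of a smooth family at order $n$ depends only on the $n$-th infinitesimal neighborhood.
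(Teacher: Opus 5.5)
The paper does not prove this statement. It imports it from \cite[Proposition 5.1]{LTT}, so there is no in-text argument to compare with.

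Your outline is a correct self-contained proof, and its central observation is the right one. Since $H^0(C,N^{-k})\neq 0$ only for $\ell\mid k$, every formal function along the zero section of $N$ is a power series in $u=w^\ell$. Hence any formal linearization carries $f$ to the linear fibration up to a base change $t=g(u)$ with $g'(0)\neq 0$, and isotriviality becomes a statement about jets of $j$. Note that the paper only uses (1)$\Rightarrow$(3), through $\rho|_{\mathcal{T}_{\rm ni}}\equiv 0$. For that direction your argument needs no jet lemma: holomorphic functions near the zero section of $N$ are convergent series in $u$, so the fibres of $f$ near $C$ are the level sets of $u$, which are all isomorphic.

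A few details should be fixed or supplied.

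\emph{Level sets.} For $c\neq 0$ the set $\{w^\ell=c\}$ is a connected unramified $\ell$-fold cover of $C$, i.e.\ a copy of $\tilde C$, not of $C$. You only need these curves to be mutually isomorphic, which is true.

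\emph{The deck action in (3)$\Rightarrow$(1).} A fibre-preserving generator is a priori $\sigma(x,s)=(\alpha(x)+\tau(s),\zeta s)$. Your fixed-point argument gives $\alpha=\mathrm{id}$, but you must still remove the $s$-dependence of $\tau$. Lift $\tau$ to $\mathbb{C}$ and write $\tau(s)=\tau(0)+\sum_{n\geq 1}\tau_n s^n$. The relation $\sigma^\ell=\mathrm{id}$ makes $\sum_{k=0}^{\ell-1}\tau(\zeta^k s)$ constant, which forces $\tau_n=0$ for $\ell\mid n$. Then the substitution $x\mapsto x+h(s)$ with $h_n=\tau_n/(1-\zeta^n)$ conjugates $\sigma$ to $(x,s)\mapsto(x+\tau(0),\zeta s)$. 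This series converges because $|1-\zeta^n|$ is bounded below. Only after this is the quotient visibly a neighbourhood of the zero section of a flat line bundle.

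\emph{Lifting to the covers in (2)$\Rightarrow$(3).} You should say why the formal isomorphism passes to the $\ell$-fold covers.
\begin{itemize}
\item Unramified covers of the formal neighbourhood correspond to unramified covers of $C$.
\item On both sides the relevant cover is the one given by the unique unitary flat structure on $N$. On the $S$ side it is glued from local $\ell$-th roots of $t$, whose constant transition factors are those of $N^{*}$.
\item On the cover of the linear model, formal functions form $\mathbb{C}[[v]]$ with $v^\ell=u$. The relation $s^\ell=g(v^\ell)$ gives $s=v\,m(v^\ell)$ with $m(0)\neq 0$.
\item So after this reparametrization the formal family over $\mathbb{C}[[s]]$ is the trivial one $\tilde C\times\mathrm{Spf}\,\mathbb{C}[[v]]$.
\end{itemize}

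\emph{Jets of $j$.} State the lemma precisely. For a smooth family of genus one curves over a disc, the $n$-jet of $j(s)$ at $0$ is the $j$-invariant of the truncated family over $\mathbb{C}[s]/(s^{n+1})$. It can be computed from a relative Weierstrass model over that Artinian ring. Such a model exists by cohomology and base change once a section is fixed, and its $j$ does not depend on the choices. Applied to the trivial family, this shows all derivatives of $j$ vanish at $s=0$.

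With these additions your proof is complete.
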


\begin{example}[Isotrivial Halphen Pencil]	
	There do exist smooth cubics $E$ with configurations of $9$ distinct points on $E$ such that the corresponding embedding $C\subset S$ is linearizable and has torsion normal bundle.
	Here is an explicit example.
	Consider the pencil of cubic curves
	\begin{equation}
		\lambda(x^3-y^3)+\mu(y^3-z^3)=0,\quad [\lambda:\mu]\in\bP^1.
	\end{equation}
	It has $9$ distinct base points, given by $[\zeta^p\zeta^q: \zeta^p:1], 1\leq p,q\leq 3,$ where $\zeta$ is a primitive third root of unity, and the pencil passes through each one with multiplicity $1$.
	This is thus a Halphen pencil of index $1$, and blowing up these $9$ points we get a rational surface $S$ with an elliptic fibration, whose general fibers are strict transforms of the members of the pencil, and with $3$ singular fibers of Kodaira type $IV$.
	Also, this pencil is isotrivial, since every smooth member is isomorphic to the Fermat cubic $x^3+y^3+z^3=0$ (the $j$-invariant is identically $0$). Taking $E$ to be a smooth member of this pencil, we obtain the desired example. The normal bundle of the strict transform $C$ in $S$ is trivial, and the embedding is linearizable since the elliptic fibration on $S$ is isotrivial.
\end{example}
	
The following fundamental result by Arnold \cite{arnold} will not be used directly in our proofs, see also \cite{GongStolovitch_Equivalence-of-neighborhoods-of-embedded-compact2022}, \cite{gongstol}, \cite{koikestol}, \cite{ogawa} for recent extensions:

\begin{theorem}{\cite{arnold}}\label{dioph}
Suppose $N_{C/S}\in\mathrm{Pic}^0(C)$ is Diophantine. Then the embedding $C\subset S$ is linearizable.
\end{theorem}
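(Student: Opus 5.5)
The approach follows Arnold's gluing-map description of a neighborhood of $C$, with the linearization obtained by a Newton (KAM-type) iteration whose small divisors are controlled by the Diophantine hypothesis. Write $C=\mathbb{C}^*/q^{\mathbb{Z}}$ with $|q|<1$, and let $A=\{r_1<|x|<r_2\}\subset\mathbb{C}^*$ be an annulus containing a fundamental domain. Since $A$ is Stein, a neighborhood of $A$ in the universal-type cover of a neighborhood of $C$ is biholomorphic to $A\times\{|w|<\varepsilon\}$ by Siu's theorem, used as in Arnold's preliminary normal form. The germ of $C\subset S$ is then recovered by gluing the two boundary collars of $A\times\{|w|<\varepsilon\}$ by a holomorphic map
\begin{equation*}
F(x,w)=\bigl(qx+f(x,w),\ \lambda w+g(x,w)\bigr),\qquad f=O(w),\quad g=O(w^2),
\end{equation*}
where $\lambda\in\mathbb{C}^*$ represents $N_{C/S}\in\mathrm{Pic}^0(C)\cong\mathbb{C}^*/q^{\mathbb{Z}}$. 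After a first holomorphic change of coordinates we may assume the linear part in $w$ is exactly $\lambda w$, since the normal bundle cocycle can be put in constant form. Linearizability of $C\subset S$ is then equivalent to finding a holomorphic change of variables $H=\mathrm{id}+O(w)$ near $A\times\{0\}$ with $H\circ F=F_0\circ H$, where $F_0(x,w)=(qx,\lambda w)$.

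The linearized (homological) equation is solved by Laurent--Taylor expansion. For a correction $h=\sum_{k\in\mathbb{Z},\,m\geq 1} h_{k,m}x^k w^m$, the equation becomes division of the error coefficients by small divisors of the form $q^{k}\lambda^{m}-\lambda$ for the $w$-component and $q^{k}\lambda^{m}-q$ for the $x$-component. None of these vanish for $m\geq 2$ (respectively $m\geq 1$) exactly because $\lambda^{m-1}\notin q^{\mathbb{Z}}$, i.e.\ because $N_{C/S}$ is nontorsion; this is the formal statement of \autoref{formal}. The Diophantine hypothesis means these divisors are bounded below by $c\,(|k|+m)^{-\nu}$. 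More precisely, we choose $\log\lambda$ so that its distance to $2\pi i\mathbb{Z}+\mathbb{Z}\log q$, after scaling by $m-1$, is at least polynomially small. The terms with $|k|\gg m$ are harmless because $|q|<1$ makes $|q^k\lambda^m|$ far from $|\lambda|$ or $|q|$. Combining this with Cauchy estimates on a slightly smaller domain, the solution of the linearized equation on $A_\delta\times\{|w|<\varepsilon e^{-\delta}\}$ is bounded by $C\delta^{-\nu-2}$ times the size of the error.

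We then run a Newton scheme. At step $n$ we solve the homological equation for the current error $F_n-F_0$, set $F_{n+1}=H_n\circ F_n\circ H_n^{-1}$, and obtain a new error that is quadratic in the old one, up to the factor $\delta_n^{-\nu-2}$. Here both the annulus and the $w$-disc shrink by $\delta_n$, with $\sum\delta_n<\infty$, e.g.\ $\delta_n=\delta_0 2^{-n}$. Because the initial error can be made as small as we like by rescaling $w\mapsto tw$, the standard super-exponential estimate $\epsilon_{n+1}\leq C\delta_n^{-\nu-2}\epsilon_n^2$ converges. The composition $H_n\circ\dots\circ H_0$ then converges uniformly on a fixed nonempty domain that still covers a fundamental domain of $C$, yielding the linearization.

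I expect the main obstacle to be the bookkeeping of domain loss in two directions at once. The $x$-annulus must not shrink below a fundamental domain for $x\mapsto qx$, and $H_n$ must be compatible with the gluing so that it descends to the quotient. I would handle this by starting from an annulus strictly wider than $|q|\le|x|\le 1$ and by imposing the normalization $h_{k,m}$ only for non-resonant indices, which here means all of them. Compatibility with the gluing is then automatic, since the conjugacy equation is itself the gluing condition.
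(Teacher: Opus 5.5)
The paper gives no proof of this theorem. It is quoted from Arnold and explicitly flagged as not used, so there is no argument of the paper to compare yours with step by step. Your outline is the standard Newton (KAM) proof of Arnold's theorem, and it is sound.

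Your first two steps coincide with material the paper does develop:
\begin{itemize}
\item Your gluing map is the one-parameter case of the preliminary normal form in \autoref{siu}.
\item Your divisors $q^k\lambda^m-\lambda=\lambda(q^k\lambda^{m-1}-1)$ and $q^k\lambda^m-q=q(q^{k-1}\lambda^m-1)$ are, after a shift of indices, the paper's $\lambda^nq^k-1$ from \eqref{system2}.
\end{itemize}
You diverge from the paper in the iteration. The paper solves the homological equation one order in $r$ at a time. That yields only the unique formal linearization, which is all the paper needs: it uses uniqueness and the semicontinuity of $\rho$, never convergence. Extracting convergence from that linear scheme would require Siegel- or Brjuno-type majorant bounds on how small divisors accumulate along the induction. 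Your quadratic scheme trades that combinatorics for domain bookkeeping. By the uniqueness argument in the proof of \autoref{properties}(b), your Newton limit is the paper's formal $g$. So in the paper's language, the theorem says $\rho>0$ at Diophantine parameters.

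A few details to fix.
\begin{itemize}
\item The divisor $q^k\lambda^m-q$ is nonzero because $\lambda^m\notin q^{\mathbb{Z}}$, not $\lambda^{m-1}$.
\item The Diophantine bound is uniform in $k$. With $\Lambda=2\pi i\mathbb{Z}+\mathbb{Z}\log q$, one has $|\lambda^mq^k-1|\geq c\min\{1,\mathrm{dist}(m\log\lambda,\Lambda)\}\geq c'm^{-B}$ for every $k\in\mathbb{Z}$. So nothing is ``chosen'', no separate treatment of $|k|\gg m$ is needed, and the only loss in $x$ comes from Cauchy estimates.
\item For the bookkeeping you single out, it is cleanest to let the gluing map live only on a collar of $|x|=1$. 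Since $|q|<1$, the Laurent solution $h_{k,m}=-f_{k,m}/(q^k\lambda^m-q)$ with data on that collar converges in $x$ on a neighborhood of the whole fundamental annulus $|q|\leq|x|\leq1$. This is the same mechanism as $(\dagger_{n-1})\Rightarrow(\star_n)$ in the paper's Section 3, and it means the $x$-domain never drops below a fundamental domain.
\item When $|\lambda|\neq1$, measure on the $F_0$-invariant tubes $|w|<\varepsilon|x|^{\alpha}$, with $|\lambda|=|q|^{\alpha}$, rather than on products $A\times\{|w|<\varepsilon\}$, which $F_0$ does not preserve. On these tubes the solution is bounded coefficientwise by the data times $Cm^{B}$. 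Your $\delta_n$-shrinking then goes through as described.
\end{itemize}
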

Here, a line bundle $L\in \mathrm{Pic}^0(C)$ is called Diophantine if there exist constants $A,B>0$ such that for all $n\geq 1$ we have
\begin{equation}
d(\mathcal{O}_C,L^n)\geq An^{-B},
\end{equation}
where $d$ is any fixed translation-invariant distance on $\mathrm{Pic}^0(C)$. The complement of Diophantine line bundles has Lebesgue measure zero in $\mathrm{Pic}^0(C)$, and has Hausdorff dimension zero.

\subsection{Preliminary normal form in families}
The main references for this section are \cite{arnold}, \cite[\S 27, \S 36 M--N]{arnold_book} and \cite{IP}.

\subsubsection{Uniformization and line bundles}
	\label{sssec:Uniformization-and-line-bundles}

Let $E$ be a smooth genus $1$ curve.
There exists $q\in\mathbb{C}$ with $|q|<1$, such that
$$E\isom \mathbb{C}^\times/q^{\mathbb{Z}}.$$
The total space of any degree zero line bundle $L\to E$ is biholomorphic to the quotient of $\mathbb{C}^\times\times\mathbb{C}\ni (z,r)$ by the identification
\begin{equation}\label{glue}
(z,r)\sim (qz,\lambda r),
\end{equation}
for some $\lambda\in\mathbb{C}^\times$.
Two such line bundles $L_1,L_2$ (with corresponding $\lambda_1,\lambda_2$) are isomorphic if and only if
\begin{equation}
	\lambda_1=\lambda_2q^k,
\end{equation}
for some $k\in\mathbb{Z}$.
The trivial line bundle therefore corresponds to taking $\lambda=1$ or a power of $q$ and more generally, a degree zero line bundle with parameter $\lambda$ is $\ell$-torsion in $\mathrm{Pic}(E)$ if and only if $\lambda^\ell=q^k$ for some $k\in\mathbb{Z}$.

\subsubsection{Patching}
	\label{sssec:Patching}

We now explain a construction of elliptic curves, as well as their neighborhoods in surfaces.
\autoref{siu} is the family analog of Arnold's result in \cite[Prop. 4.1.1]{arnold} (see also \cite[\S 27.F]{arnold_book}, \cite[p.109-110]{IP}, \cite[Lemma 2.1]{LTV}) which shows that any germ of an elliptic curve in a surface can be obtained in this manner.

Let us fix the notation for annuli in $\bC^\times$ and disks in $\bC$:
\begin{align}
	\label{eqn:annulus_def}
	\begin{split}
		A^\circ(r,R)& :=\{z\in\bC^\times: r<|z|<R\}\\
		A(r,R) & := \{z\in\bC^\times: r\leq |z|\leq R\}\\
		D^\circ(r) & := \{z\in\bC: |z|<r\}
	\end{split}
\end{align}
for $0<r<R$.
We will sometimes abuse notation and write $A(r,r)=\{z\in\bC^\times: |z|=r\}$ for the circle of radius $r$.

As a start, suppose $f\colon A^\circ(1-\ve,1+\ve)\to \bC^\times$ is a holomorphic function giving a biholomorphism between $A^\circ(1-\ve,1+\ve)$ and an open neighborhood of $|z|=|q|$ in $\bC^\times$.
Then we can ``glue'' the two neighborhoods and obtain an elliptic curve as follows.
We take as charts of the elliptic curve the open set $U_0$ between $A(1,1)$ and $f(A(1,1))$, another two more charts $U_1:=A^\circ(1-\ve,1+\ve)$ and $U_q:=f(A^\circ(1-\ve,1+\ve))$.
Then the gluing maps between $U_0$ and $U_1,U_q$ are given by the overlaps in $\bC^\times$, and the gluing map between $U_1$ and $U_q$ is given by $f$.
Note that no point belongs to three charts, provided $\ve$ is sufficiently small.
By the uniformization theorem, there exists a holomorphic map $g$ defined in some $A(|q|-\ve_1, 1+\ve_1)$ and $q'\in \bC^\times$ with $|q'|<1$ such that
\[
	q'\cdot g(z) = g(f(z)) \quad \forall z\in A(1-\ve_1,1+\ve_1).
\]
It is clear from this construction that only the germ of $f$ at $|z|=1$ matters.

\subsubsection{Patching a surface}
	\label{sssec:Patching-a-surface}

Suppose now given a map $f\colon A^\circ(1-\ve,1+\ve)\times D^\circ(\ve) \to \bC^\times \times D^\circ(1)$ with the property that $f(z,0)\in \bC^\times\times\{0\}$, and which is a biholomorphism between $A^\circ(1-\ve,1+\ve)\times D^\circ(\ve)$ and an open neighborhood of $|z|=|q|$ in $\bC^\times\times D^\circ(1)$.
Then we can glue the two neighborhoods and obtain a surface containing an elliptic curve as follows.
The open set $U_0\subset \bC^\times \times D^\circ(1)$ is a tubular neighborhood of the cylinder between $A(1-\ve_1,1-\ve_1)$ and $f(A(1-\ve_1,1-\ve_1)\times\{0\})$ (e.g. this set times a sufficiently small disk $D^\circ(\ve_2)$ in the transverse direction).
The other two open sets are tubular neighborhoods of $A(1-\ve_1,1+\ve)$ and $f(A(1-\ve_1,1+\ve)\times\{0\})$ respectively, and the gluing maps are given by the overlaps in $\bC^\times\times D^\circ(1)$ and by $f$.
We choose the $D^\circ(\ve_2)$ factor small enough (depending on $\ve_1$) so that no point belongs to three charts.

We note that this construction also works when the gluing map $f$ is allowed to vary in a holomorphic family (as will be discussed in more detail below).
We also note that two such patching constructions give biholomorphic surfaces if and only if, denoting by $f_1,f_2$ the gluing maps, there exists a biholomorphism onto its image $g\colon A(|q|-\ve_3,1+\ve_3)\times D^\circ(\ve_4) \to \bC^\times \times D^\circ(1)$ such that
\[
	g(f_1(z,r)) = f_2(g(z,r)) \quad \forall (z,r)\in A(1-\ve_3,1+\ve_3)\times D^\circ(\ve_4)
\]
where $\ve_3,\ve_4$ are sufficiently small but such that $A(|q|-\ve_3,1+\ve_3)$ contains $f_1(A(1,1)\times \{0\})$, and we have $g(z,0)\in \bC^\times\times\{0\}$.

\subsubsection{Uniformization in families}
	\label{sssec:Uniformization-in-families}
We now proceed to a discussion of these constructions in families, which will be the main setting for our proofs.

Suppose we have a ball $B\subset\mathbb{C}^N$, and a proper holomorphic submersion $h:\mathcal{E}\to B$ from an $(N+1)$-fold $\mathcal{E}$ to $B$ with connected fibers $E_w=h^{-1}(w), w\in B$, where $E_w\cong \mathbb{C}^\times/q(w)^{\mathbb{Z}}$ is a genus one curve, and $q\colon B\to\mathbb{C}$ is holomorphic with $|q(w)|<1$ for all $w\in B$.
Suppose we also have a proper holomorphic submersion $p\colon \mathcal{S}\to B$ from an $(N+2)$-fold $\mathcal{S}$ to $B$ with connected fibers $\mathcal{S}_w=p^{-1}(w), w\in B$, which are therefore compact complex surfaces. Lastly, suppose we also have a holomorphic embedding $\iota\colon \mathcal{E}\to \mathcal{S}$ such that $p\circ\iota=h$, which we can thus think of as a holomorphic family of embeddings $\iota_w\colon E_w\to \mathcal{S}_w,w\in B$.
We also assume that the normal bundle $N_{E_w/\mathcal{S}_w}$ has degree zero, i.e. $N_{E_w/\mathcal{S}_w}\in \mathrm{Pic}^0(E_w), w\in B$, and so $N_{E_w/\mathcal{S}_w}$ is obtained by the gluing in \autoref{glue} with $\lambda=\lambda(w)$ a holomorphic function $\lambda\colon B\to\mathbb{C}^\times$.
Lastly, in order for the discussion to be nontrivial, we will always assume that there is some value of $w\in B$ for which $N_{E_w/\mathcal{S}_w}$ is nontorsion.

\begin{proposition}[Preliminary normal form in families]
		\label{siu}
Up to shrinking $B$, there is an open neighborhood $\mathcal{U}$ of $\iota(\mathcal{E})$ in $\mathcal{S}$ and a biholomorphism
\begin{equation*}
	\Phi\colon \mathcal{U}\to \cU_{nf}
\end{equation*}
where $\cU_{nf}$ is a neighborhood of the central fiber in the manifold obtained as follows.
There exists holomorphic functions $a,b \colon B\times A^\circ(1-\ve,1+\ve) \times D^\circ(\ve) \to \bC$ such that $\cU_{nf}$ is obtained by patching as in \autoref{sssec:Patching-a-surface} via the gluing map
\begin{equation}
	\label{normalform}
	f(w,z,r):= \Big(w, q(w)z\big(1+r\cdot b(w,z,r)\big),\lambda(w) r \big(1 + r\cdot a(w,z,r)\big)\Big)
\end{equation}
\end{proposition}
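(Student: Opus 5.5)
We will follow Arnold's proof of \cite[Prop. 4.1.1]{arnold}, keeping track of the parameter $w\in B$ at every step. The goal is a neighborhood of $\iota(\mathcal{E})$ in $\mathcal{S}$ covered by two kinds of charts in which the fibration over $B$ is the coordinate $w$, and whose transition map has the form \eqref{normalform}.

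\textbf{Step 1: uniformize the curves and the normal bundles.} Lift the uniformization $E_w\cong \bC^\times/q(w)^{\bZ}$ holomorphically in $w$. Locally over $B$, the relative period $q(w)$ and the relative universal cover can be chosen holomorphically, using the holomorphic dependence of the periods of $h$. This gives a holomorphic map from $B\times A^\circ(|q|-\ve,1+\ve)$ onto a neighborhood of $\iota(\mathcal{E})$ inside $\mathcal{E}$, whose gluing is $z\mapsto q(w)z$. Pull back the normal bundle $N_{\mathcal{E}/\mathcal{S}}$ to this cover. It is then trivialized by a section whose transformation under $z\mapsto q(w)z$ is multiplication by a unit. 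By a Cousin-type argument on the annulus, solved holomorphically in $w$ (as for $\log$ of the multiplier), we can normalize that unit to be the constant $\lambda(w)$. This realizes $N_{E_w/\mathcal{S}_w}$ as in \eqref{glue}.

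\textbf{Step 2: choose coordinates in which the surface is glued along an annulus.} The key input is a family version of Siu's theorem \cite{Siu}: the germ of $\mathcal{E}$ in $\mathcal{S}$ along the open piece $\iota(B\times A^\circ(|q|-\ve,1+\ve))$, which is Stein after removing the gluing seam, admits a holomorphic tubular neighborhood. The reason is that a Stein submanifold has a neighborhood retracting holomorphically onto it. Concretely, the relevant Stein space is the region lying over the annulus minus the seam, with the parameter $w$ in the ball $B$, so that products of Stein spaces remain Stein. We choose this retraction compatibly with $p$; this works because $\mathcal{S}\to B$ is a submersion, so we may take a retraction in the fiber direction, using the fiberwise Stein neighborhood theorem applied to the total space over $B$. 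We then pick a fiber coordinate $r$ that equals the trivializing coordinate of Step 1 to first order along $\mathcal{E}$.

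\textbf{Step 3: read off the gluing map.} On the single chart over $B\times A^\circ(|q|-\ve,1+\ve)\times D^\circ(\ve)$ we now have coordinates $(w,z,r)$. The transition between the two ends of the annulus is a biholomorphism $f$ fixing $w$ and preserving $\{r=0\}$. Its restriction to $\{r=0\}$ is $z\mapsto q(w)z$, and its linear part in $r$ is $\lambda(w)r$, because the coordinates were normalized in Step 1. Taylor expanding in $r$ gives exactly \eqref{normalform}: the first component equals $q z$ on $r=0$, so it is $qz(1+rb)$, and the second vanishes to first order with linear coefficient $\lambda$, so it is $\lambda r(1+ra)$. The functions $a,b$ are holomorphic in all variables. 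Finally, shrinking $B$ and $\ve$ lets $\Phi$ be defined on a uniform neighborhood $\mathcal{U}$.

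\textbf{Main obstacle.} The hardest part is Step 2, namely making Siu's Stein neighborhood theorem hold uniformly in $w$. I would first try to apply Siu's theorem directly to the Stein manifold $B\times(\text{annulus})$ embedded in $\mathcal{S}$, where $B$ is a ball. I would then obtain a retraction that commutes with $p$ by composing with a holomorphic projection along $p$-fibers. Such a projection exists locally because $p$ is a submersion and the base is Stein, so the relevant cohomology vanishes.
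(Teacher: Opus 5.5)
There is a genuine gap in Step 2, and it is not the one you flag. Siu's theorem, in the form you need (a closed Stein submanifold has a neighborhood biholomorphic to a neighborhood of the zero section of its normal bundle), applies only to an \emph{embedded} Stein submanifold. The piece you want to linearize, $B\times A^\circ(|q|-\varepsilon,1+\varepsilon)$, is not embedded in $\mathcal{S}$. Under $\iota\circ\sigma$ the collar near $|z|=|q|$ and the collar near $|z|=1$ have the same image, and this double covering is exactly what produces the gluing map in Step 3.

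Suppose instead, as you suggest, you remove the seam to get an honest embedded Stein submanifold $\iota(\mathcal{E}\setminus\text{seam})\cong\{(w,z)\colon |q(w)|<|z|<1\}$ of $\mathcal{S}\setminus\iota(\text{seam})$. Then Siu's theorem only gives a tubular neighborhood of this open annulus. It has no control as one approaches the seam from either side, and no region is covered twice. So in Step 3 there is no ``transition between the two ends'' to read off. Adding a separate chart around the seam would produce two transition maps. Making one of them the identity is a nonlinear gluing problem of the same nature as the one you started with.

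The missing idea is to pass to a covering space, which is what the paper does. Take a neighborhood $\mathcal{V}$ of $\iota(\mathcal{E})$ homeomorphic to $\mathcal{E}\times D$; this exists because the normal bundle is topologically trivial. Take its $\mathbb{Z}$-cover $\tilde{\mathcal{V}}$ extending $\sigma\colon B\times\mathbb{C}^\times\to\mathcal{E}$. Then $\mathcal{E}$ lifts to a \emph{closed embedded} copy of the Stein manifold $B\times\mathbb{C}^\times$, with holomorphically trivial normal bundle. Siu's theorem linearizes a neighborhood of this copy. The gluing map is the primitive deck transformation conjugated by this linearization, restricted to $B\times A^\circ(1-\varepsilon,1+\varepsilon)\times D^\circ(\varepsilon)$.

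No ``family version'' of Siu's theorem is needed, since $w$ is just part of the Stein manifold $B\times\mathbb{C}^\times$. Compatibility with $p$, which you single out as the main obstacle, is then easy. Replace the $B$-coordinate of the linearizing chart by $p$ itself; the resulting map has invertible differential along the zero section and is injective on it, so it is a biholomorphism near it. The deck transformation commutes with $p$, so the conjugated map fixes $w$. Your Step 1 normalization of the linear part to $\lambda(w)$ is fine, and it spells out a point the paper treats quickly.
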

Note that the resulting $\cU_{nf}$ is equipped with a proper holomorphic submersion ${\rm pr}_{B,nf}\colon \cU_{nf}\to B$ and we have
${\rm pr}_{B,nf}\circ\Phi={\rm pr}_{B}$.

\begin{proof}
To start, by a classical result of Kodaira-Spencer \cite[Theorem 14.3]{kodaira_spencer} (see also \cite[Satz 3.6]{wehler}), up to shrinking $B$, there is a biholomorphism $\Pi:(B\times\mathbb{C}^\times)/\mathbb{Z}\to \mathcal{E}$ with $h\circ\Pi={\rm pr}_B$, where the $\mathbb{Z}$-action is generated by $(w,z)\mapsto (w,q(w)z),$ $(w,z)\in B\times\mathbb{C}^\times$. This gives in particular a holomorphic $\mathbb{Z}$-covering map $\sigma:B\times\mathbb{C}^\times\to\mathcal{E}$.

Now, by assumption, each normal bundle $N_{E_w/\mathcal{S}_w}$ is topologically trivial, and by Ehresmann's Lemma the proper submersion $p:\mathcal{S}\to B$ is a smooth fiber bundle, so the normal bundle of $\iota(\mathcal{E})$ in $\mathcal{S}$ is also topologically trivial. We can thus find a neighborhood $\mathcal{V}$ of $\iota(\mathcal{E})$ in $\mathcal{S}$ and a homeomorphism $\Psi:\mathcal{V}\to \mathcal{E}\times D$, where $D\subset\mathbb{C}$ is a disc, with $(\Psi\circ\iota)(p)=(p,0)$ for all $p\in \mathcal{E}$. We can then consider the $\mathbb{Z}$-covering space $\pi:\ti{\mathcal{V}}\to \mathcal{V}$ which extends the $\mathbb{Z}$-covering $\sigma:B\times\mathbb{C}^\times\to \mathcal{E}$. There is a homeomorphism $\ti{\Psi}:\ti{\mathcal{V}}\to B\times \mathbb{C}^\times\times D$, such that the following commutes
\begin{equation}
\begin{tikzcd}
\ti{\mathcal{V}} \arrow{r}{\ti{\Psi}} \arrow[swap]{d}{\pi} &  B\times \mathbb{C}^\times\times D \arrow{d}{\sigma\times\mathrm{Id}} \\%
\mathcal{V} \arrow{r}{\Psi}& \mathcal{E}\times D
\end{tikzcd}
\end{equation}
and so $\ti{\Psi}^{-1}(B\times\mathbb{C}^\times\times\{0\})=\pi^{-1}(\iota(\mathcal{E}))$.
If we call $G: B\times \mathbb{C}^\times\times D\to  B\times \mathbb{C}^\times\times D$ the primitive deck transformation of the covering $\sigma\times\mathrm{Id}:B\times \mathbb{C}^\times\times D\to \mathcal{E}\times D$, so $G(w,z,r)=(w,q(w)z,r)$, then $F:=\ti{\Psi}^{-1}\circ G\circ \ti{\Psi}:\ti{\mathcal{V}}\to\ti{\mathcal{V}}$ is a primitive deck transformation for $\pi$.

The embedding $\iota:\mathcal{E}\to\mathcal{V}$ lifts to a holomorphic embedding $\ti{\iota}:B\times \mathbb{C}^\times\to \ti{\mathcal{V}}$ with topologically trivial normal bundle (with $\pi\circ\ti{\iota}=\iota$).
Since $B\times\mathbb{C}^\times$ is Stein this normal bundle is also holomorphically trivial, and a deep theorem of Siu \cite[Corollary 1]{Siu} shows that this embedding is linearizable, i.e. isomorphic to a neighborhood of the zero section inside its normal bundle.
More precisely, there exists an open neighborhood $\cN\subset B\times \bC^\times \times D$ of $B\times \bC^\times \times \{0\}$, an open neighborhood $\ti{\mathcal{U}}$ of $\ti{\iota}(B\times \mathbb{C}^\times)$ in $\ti{\mathcal{V}}$, and a biholomorphism
\begin{equation*}
	\ti{\Phi}\colon \ti{\mathcal{U}}\to \cN\subset  B\times\mathbb{C}^\times\times D,
\end{equation*}
with the property that $\ti{\Phi}^{-1}(B\times\mathbb{C}^\times\times\{0\})=\ti{\iota}(B\times \mathbb{C}^\times)$ and $\ti{\Phi}$ equals $\ti{\Psi}$ when restricted to this set.
If we let $\ti{F}:=\ti{\Phi}\circ F\circ\ti{\Phi}^{-1}$, we get two possibly smaller neighborhoods $\cN_1,\cN_2\subset B\times \mathbb{C}^\times\times D$ with $\ti{F}\colon \cN_1\to \cN_2$ biholomorphic, which equals the identity in the $B$ factor, and satisfies:
\begin{equation}
	\label{autom}
	\ti{F}(w,z,0)=(\ti{\Phi}\circ\ti{\Psi}^{-1}\circ G)(w,z,0)=(\ti{\Phi}\circ\ti{\Psi}^{-1})(w,q(w)z,0)=(w,q(w)z,0),
\end{equation}
for all $w\in B,z\in \mathbb{C}^\times$.
Write
\begin{equation*}
	\ti{F}(w,z,r)=\Big(w,B_1(w,z,r),A_1(w,z,r)\Big),
\end{equation*}
where $A_1,B_1$ are holomorphic on $\cN_1$.
Then \autoref{autom} implies that
\begin{equation*}
	B_1(w,z,r)=q(w)z(1+rb(w,z,r)),
\end{equation*}
for some holomorphic function $b$ on $\cN_1$.
Similarly, in the $D$ factor, $\ti{F}$ vanishes at $r=0$ (the zero section of the normal bundle), and its $r$-derivative at $r=0$ recovers the normal bundle $N_{E_w/\mathcal{S}_w}$ and so it equals $\lambda(w)$.
It follows that
\begin{equation*}
	A_1(w,z,r)=\lambda(w) r(1+ra(w,z,r)),
\end{equation*}
for some holomorphic function $a$ on $\cN_1$.
Altogether this gives
\begin{equation*}
	\ti{F}(w,z,r)=\Big(w,q(w)z\big(1+rb(w,z,r)\big),\lambda(w) r\big(1+ra(w,z,r)\big)\Big).
\end{equation*}
Up to shrinking again the base $B$, we can restrict the functions $A_1,B_1$ to an open set of the form $B\times A^\circ(1-\ve,1+\ve)\times D^\circ(\ve)$, and it follows that $\cU:=\pi(\ti{\cU})$ is biholomorphic with the fibration $\cU_{nf}$ obtained by patching as in \autoref{sssec:Patching-a-surface} via the gluing map $f$ given by \eqref{normalform}.
\end{proof}


\section{Formal linearization}\label{sec3}

\subsection{Formal linearization in families}\label{change}

\subsubsection{Series expansions}
	\label{sssec:Series-expansions}

Thanks to \autoref{siu}, we can choose a preliminary normal form as in \eqref{normalform} for our embedding.
This normal form is not unique, but we shall fix one such choice for the rest of the argument, so the holomorphic functions $a(w,z,r),b(w,z,r)$ are fixed.
Set
\begin{equation*}
	f(w,z,r)=\Big(w,q(w)z(1+rb(w,z,r)), \lambda(w)r(1+ra(w,z,r))\Big).
\end{equation*}
We seek a formal change of variable given by
\begin{equation}\label{g}
	g(w,z,r)=\Big(w,z(1+rd(w,z,r)), r(1+rc(w,z,r))\Big),
\end{equation}
satisfying the linearization property
\begin{equation}\label{conj}
	(g\circ f\circ g^{-1})(w,z,r)=\Big(w, q(w)z,\lambda(w)r\Big),
\end{equation}
where $c,d$ are given by formal power series in $r$
\begin{equation}\label{g2}
	d(w,z,r)=\sum_{n=0}^\infty d_n(w,z)r^n, \quad c(w,z,r)=\sum_{n=0}^\infty c_n(w,z)r^n,
\end{equation}
and the coefficients themselves have formal Laurent series expansions in $z$
\begin{equation}\label{laur}
c_n(w,z)=\sum_{k\in\mathbb{Z}}c_{n,k}(w)z^k, \quad d_n(w,z)=\sum_{k\in\mathbb{Z}}d_{n,k}(w)z^k,
\end{equation}
and the coefficients $c_{n,k}(w), d_{n,k}(w)$ are \emph{meromorphic} functions of $w\in B$.

\subsubsection{Resonances}
	\label{sssec:Resonances}
To describe the possible poles of the coefficients, for $n\in\mathbb{N}, k\in\mathbb{Z},$ set
\begin{equation}
	P_{n,k}:=\{w\in B\ |\ \lambda(w)^nq(w)^k=1\},
\end{equation}
be the set of resonances with exponents $n,k$.
Observe that the values of $w\in B$ for which we have a resonance $\lambda(w)^nq(w)^k=1$ for some $k\in\mathbb{Z}$ are exactly those for which $N_{E_w/\mathcal{S}_w}$ is $n$-torsion.
Since $\lambda^n q^k-1$ is a holomorphic function on $B$ which is not identically zero, thanks to our standing assumptions at the very beginning that there is some value of $w\in B$ for which $N_{E_w/\mathcal{S}_w}$ is nontorsion, we see that each $P_{n,k}$ is a (possibly empty) divisor in $B$.
In fact, more is true:

\begin{proposition}\label{discr}
	For each $n\in\mathbb{N},$ the subset $P_n:=\bigcup_k P_{n,k}\subset B$ is also a divisor, i.e. a locally finite union of closed irreducible analytic hypersurfaces.
\end{proposition}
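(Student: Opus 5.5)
Fix $n\in\mathbb{N}$. The plan is to show that the family $\{P_{n,k}\}_{k\in\mathbb{Z}}$ is locally finite in $B$: every point $w_0\in B$ has a neighborhood $V$ meeting only finitely many $P_{n,k}$. Then $P_n\cap V$ is a finite union of divisors (each $P_{n,k}$ is the zero set of the holomorphic function $\lambda^nq^k-1$, which is not identically zero). Hence $P_n$ is closed and is locally a finite union of irreducible analytic hypersurfaces, i.e.\ a divisor.

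To prove local finiteness, fix $w_0\in B$ and choose a relatively compact neighborhood $V\Subset B$ of $w_0$; shrinking $B$ if necessary, we may assume $\overline V\subset B$. Since $q$ and $\lambda$ are holomorphic on $B$ with $0<|q|<1$ and $\lambda\neq 0$, compactness gives constants $0<c<1$ and $M\ge 1$ such that on $\overline V$
\begin{equation*}
|q(w)|\le c,\qquad M^{-1}\le |\lambda(w)|\le M.
\end{equation*}
(Note that $q\neq0$ because $E_w\cong\mathbb{C}^\times/q(w)^{\mathbb{Z}}$.)

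Now suppose $w\in V\cap P_{n,k}$. Then $|q(w)|^{k}=|\lambda(w)|^{-n}$, so
\begin{equation*}
|k|\,\log(1/c)\le |k|\,\bigl|\log|q(w)|\bigr|=n\,\bigl|\log|\lambda(w)|\bigr|\le n\log M.
\end{equation*}
Hence $|k|\le n\log M/\log(1/c)$, and only finitely many $k$ satisfy $P_{n,k}\cap V\neq\emptyset$, which is exactly the local finiteness claimed.

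The only point requiring care is to ensure the bounds are uniform, which is why we pass to a compact neighborhood: $|q|$ must stay bounded away from $1$ and $\lambda$ bounded away from $0$ and $\infty$. Irreducible components are then taken componentwise in each of the finitely many $P_{n,k}$ meeting $V$. There is no real obstacle here.
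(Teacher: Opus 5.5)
Your proof is correct and is essentially the paper's argument. Both rely on the fact that $|\lambda(w)|^n|q(w)|^k=1$ forces $k=-n\log|\lambda(w)|/\log|q(w)|$, so only finitely many $k$ are relevant near any point, and $P_n$ is then locally a finite union of the divisors $P_{n,k}$; the paper gets at most two values of $k$ from continuity of this ratio, while you get a uniform bound on $|k|$ by compactness.
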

\begin{proof}
For each $n$, if we have $\lambda(w)^nq(w)^k=1$ for some $k\in\mathbb{Z}$, then necessarily $k=\sfloor{-n\frac{\log|\lambda(w)|}{\log|q(w)|}}$ and as $w$ varies in some small open set, the quantity $\sfloor{-n\frac{\log|\lambda(w)|}{\log|q(w)|}}$ can achieve at most $2$ values. Thus, it suffices to consider these $2$ values of $k$, and the union of the set of zeros of $\lambda^n q^k-1$ for these $2$ values of $k$ is indeed a divisor.
\end{proof}
Clearly, the same conclusion holds for the subsets $ P_{\leq n}$, where we define
	\[P_{\leq n}:=\bigcup_{1\leq m\leq n}P_{m}.\]
For $w\in B$ we define also
\begin{equation*}
	A_w:=\{z\in\mathbb{C}\ |\ |q(w)|\leq |z|\leq 1\}.
\end{equation*}
The following is the main result of this section, and allows us to upgrade the preliminary normal form to a formal linearization, whose coefficients vary meromorphically in our family:
\begin{proposition}
For every $n\in\mathbb{N}, k\in\mathbb{Z}$ we can find unique meromorphic functions $c_{n,k}(w), d_{n,k}(w)$ with $w\in B$, such that for every $w\not\in  P_{\leq n}$ these functions are holomorphic at $w$ and the Laurent series for $c_n(w,z)$ and $d_n(w,z)$ in \eqref{g2} converge for $z$ in an open annulus containing $A_w$. For every such $w\not\in \bigcup_n P_{\leq n}$ and $z\in A_w$, the formal change of variable $g$ defined by \eqref{g}, \eqref{g2}, \eqref{laur} satisfies \eqref{conj}.
\end{proposition}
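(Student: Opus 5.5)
Rather than conjugate by $g$ directly, I would rewrite \eqref{conj} as the functional equation $g\circ f = L\circ g$, where $L(w,z,r)=(w,q(w)z,\lambda(w)r)$. I would expand both sides in powers of $r$ and solve for the coefficients $c_n,d_n$ by induction on $n$. Writing $f=(w,\,q z(1+rb),\,\lambda r(1+ra))$ and $g=(w,\,z(1+rd),\,r(1+rc))$, the second components give
\[
\lambda r(1+ra)\bigl(1+\lambda r(1+ra)\,c(w,qz(1+rb),\lambda r(1+ra))\bigr)=\lambda r(1+rc(w,z,r)),
\]
and the first components give the analogous identity for $d$. Compare the coefficient of $r^{n+1}$ in the $c$-equation (after dividing by $\lambda r$) and of $r^{n+1}$ in the $d$-equation (after dividing by $qz$). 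This yields homological equations of the form
\[
\lambda(w)^{n+1}c_n(w,q(w)z)-c_n(w,z)=\Gamma_n(w,z),\qquad \lambda(w)^{n+1}d_n(w,q(w)z)-d_n(w,z)=\Delta_n(w,z).
\]
Here $\Gamma_n,\Delta_n$ are universal polynomial expressions in the Taylor coefficients (in $r$) of $a,b$, in $\lambda$, $q$, and in the functions $c_m,d_m$ for $m<n$ together with their $z$-derivatives and values at $qz$. These arise from Taylor expanding $c_m(w,qz(1+rb))$ in the second slot. I need to check the exact exponent of $\lambda$: the leading term of $c$ enters as $\lambda^{n+1}$ times $r^{n+1}$ relative to $r^{n+1}$ on the right, so the relevant resonance is $\lambda^{n+1}q^k=1$. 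I would then re-index (or check the paper's convention) so that the poles occurring at step $n$ lie in $P_{\le n+1}$, and then possibly shift the labeling so that it matches the statement's $P_{\le n}$. The main point is that only finitely many resonance orders enter at each stage.

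Next I would solve the homological equation by Laurent expansion. If $\Gamma_n(w,z)=\sum_k\Gamma_{n,k}(w)z^k$, then
\[
c_{n,k}=\frac{\Gamma_{n,k}}{\lambda^{n+1}q^k-1},
\]
and similarly for $d_{n,k}$. Uniqueness is immediate whenever the denominator is not identically zero, which holds by the standing assumption. Meromorphy also follows, since each Laurent coefficient of a function holomorphic in $(w,z)$ on an annulus is holomorphic in $w$, given by the Cauchy integral. The only issue is the induction hypothesis about the domain of convergence, which has to be set up carefully. The equation $g\circ f=L\circ g$ evaluates $c_m$ at $qz(1+rb)$, so I need $c_m(w,\cdot)$ holomorphic near the circle $|z|=|q|$ as well as near $|z|=1$. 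For this reason I would formulate the inductive statement as holomorphy of $c_m,d_m$ in $(w,z)$ on $(B\setminus P_{\le m})\times\{\,|q(w)|-\delta<|z|<1+\delta\,\}$, locally uniformly. Then $\Gamma_n$ is holomorphic on a slightly smaller annular neighborhood of $A_w$, where the smaller width comes from the gluing region $A^\circ(1-\ve,1+\ve)$ together with its image under $z\mapsto qz$.

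The main obstacle is showing that the formal solution converges on an open annulus containing $A_w$ at the nonresonant points $w$. A priori $\Gamma_n$ is only holomorphic near the two boundary circles. The approach I would take is to realize $c_n$ on the actual elliptic curve rather than on $\mathbb{C}^\times$. The equation $\lambda^{n+1}c_n(qz)-c_n(z)=\Gamma_n$ is a Čech coboundary equation for a cocycle valued in $N^{-(n+1)}$ (up to the sign convention) on $E_w$, with respect to the cover by $U_0,U_1,U_q$ from \autoref{sssec:Patching}. Since this bundle is nontrivial of degree $0$ when $w\notin P_{\le n+1}$, we have $H^0=H^1=0$. Hence there is a unique holomorphic solution on the cover, depending holomorphically on $w$ off the resonant divisor; this follows from standard Cauchy-integral/Laurent splitting on the annulus with uniform estimates, in which the factor $1/(\lambda^{n+1}q^k-1)$ decays geometrically in $|k|$ because $|q|<1$. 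The solution continues holomorphically past $|z|=1$ and $|z|=|q|$ by the functional equation itself. Finally, evaluating the constructed $g$ at $z\in A_w$ and $w\notin\bigcup_n P_{\le n}$, the identity \eqref{conj} holds order by order as formal power series in $r$, by construction.
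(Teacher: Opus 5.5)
Your proposal is correct. It reorganizes the paper's argument rather than reproducing it.

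\textbf{How the two routes differ.} The paper normalizes step by step. It conjugates by $g_n=(w,z(1+r^n\tilde d_n),r(1+r^n\tilde c_n))$ to kill the order-$r^n$ terms of $f^{(n-1)}$, so each homological equation has right-hand side the current coefficient $a^{(n-1)}_n$ or $b^{(n-1)}_n$. It then has to propagate the classes $(\star_n)$ and $(\dagger_n)$ through each conjugation, and prove that the coefficients of $g_p\circ\cdots\circ g_1$ stabilize. You instead solve $g\circ f=L\circ g$ for the coefficients of $g$ directly. This buys you three things:
\begin{itemize}
\item the $c$- and $d$-equations decouple, since the first component involves only $d,a,b$ and the second only $c,a,b$;
\item there is no composition or stabilization step;
\item uniqueness of the formal linearization is immediate from the triangular structure, and this is exactly the uniqueness the paper later invokes in \autoref{properties}(b).
\end{itemize}
In both routes the solution at each order must be holomorphic near $|z|=|q|$ as well as near $|z|=1$, because it gets evaluated at $qz$. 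You carry this in your inductive hypothesis, while the paper splits it between $(\star_n)$ and $(\dagger_n)$. Your joint-holomorphy formulation also gives the continuity in $w$ that the semicontinuity argument in \autoref{sec4} uses.

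\textbf{The shared analytic step.} The decisive step is the same in both. Dividing the Laurent coefficients by $\lambda^{n+1}q^k-1$ keeps their decay as $k\to+\infty$ and gains a factor comparable to $|q|^{|k|}$ as $k\to-\infty$. So a right-hand side holomorphic near $|z|=1$ gives a solution holomorphic on an annulus containing $A_w$. Your \v{C}ech formulation, using $H^0=H^1=0$ for the nontrivial degree-zero bundle $N^{-(n+1)}$, is the intrinsic form of this.

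\textbf{Indexing.} Your worry is justified. Read literally with \eqref{g2}, $c_0$ already has poles along $P_1$. The paper's proof tacitly switches to the convention $rc=\sum_{n\geq 1}c_nr^n$.

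\textbf{Points to tighten.}
\begin{itemize}
\item $\Gamma_n$ is only defined near $|z|=1$, because $a$ and $b$ are. It is not defined on a neighborhood of $A_w$, so drop that sentence; nothing depends on it.
\item $1/(\lambda^{n+1}q^k-1)$ decays only as $k\to-\infty$. As $k\to+\infty$ it tends to $-1$, and the decay there comes from $\Gamma_{n,k}$ itself.
\item Holomorphy of $\Gamma_n$ on $B\setminus P_{\leq n}$ times an annulus only makes the $c_{n,k}$ holomorphic off the divisor, not meromorphic on $B$. To get meromorphy, add to the induction that near each $w_0\in B$ some finite product $h$ of factors $\lambda^jq^k-1$ (with $j\leq n+1$, repetitions allowed) makes $h\,c_n$ jointly holomorphic in $(w,z)$ on a neighborhood of $w_0$ times an annulus containing the $A_w$. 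This works because only finitely many $k$ give factors vanishing near $w_0$, while the rest satisfy $|\lambda^jq^k-1|\geq c>0$ uniformly there.
\end{itemize}
The paper is no more explicit on the meromorphy point: its asserted closure of $(\dagger_n)$ under products needs the same kind of local uniformity.
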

\begin{proof}
For this proof, let us say that a function $f(w,z)$ satisfies property $(\star_n)$ for some $n\geq 1$ if it has a Laurent series expansion $f(w,z)=\sum_{k\in\mathbb{Z}}f_{k}(w)z^k$ with $f_k(w)$ meromorphic functions in $w\in B$, such that if $w\not\in  P_{\leq n}$  then $f_k$ is holomorphic at $w$ for all $k\in\mathbb{Z}$, and $f(w,z)$ is holomorphic in $z$ in an open annulus containing $A_w$. We say that $f(w,z)$ satisfies the weaker property $(\dagger_n)$, for $n\geq 0$, if the same conditions hold except that $f(w,z)$ is only required to be holomorphic in $z$ in an open annulus containing the unit circle $\{|z|=1\}$ (rather than all of $A_w$); here we set $P_{\leq 0}:=\emptyset$. Both families are closed under taking sums, products, multiplication by holomorphic functions of $w$, and derivatives in the $z$ variable; moreover $(\star_n)$ implies $(\dagger_n)$, while $(\star_m)$ implies $(\star_n)$ and $(\dagger_m)$ implies $(\dagger_n)$ whenever $m\leq n$, since $P_{\leq m}\subset P_{\leq n}$ and $A_w$ contains $\{|z|=1\}$.

Furthermore, if $h(w,z)$ satisfies $(\star_n)$, then its dilation $h(w,q(w)z)$ satisfies $(\dagger_n)$. Indeed, for $w\not\in P_{\leq n}$ the function $h(w,\cdot)$ is holomorphic on an open annulus $U$ containing $A_w$, so $z\mapsto h(w,q(w)z)$ is holomorphic on $\{z\ |\ q(w)z\in U\}\supset\{z\ |\ q(w)z\in A_w\}=\{1\leq|z|\leq|q(w)|^{-1}\}$, which contains the unit circle; as $U$ is open, the region of holomorphy contains an open annulus about $\{|z|=1\}$. 

\noindent
{\bf The homological equation. }
Following Arnold we expand in convergent power series
\begin{equation}
rb(w,z,r)=\sum_{n=1}^\infty b_n(w,z)r^n, \quad ra(w,z,r)=\sum_{n=1}^\infty a_n(w,z)r^n,
\end{equation}
where the functions $a_n,b_n$ are holomorphic on $B\times A^\circ(1-\ve,1+\ve)$, and we construct the changes of variable $g_n$ inductively on $n\geq 1$ so as to solve \eqref{conj} ``one order at the time''.

Set $\Phi_0=\mathrm{id}$ and $f^{(0)}=f$, and suppose inductively that we have already constructed formal changes of variable $g_1,\dots,g_{n-1}$ as in \eqref{gn} below, in such a way that, setting $\Phi_{n-1}:=g_{n-1}\circ\cdots\circ g_1$, the partially normalized map
\begin{equation}\label{partialnf}
f^{(n-1)}:=\Phi_{n-1}\circ f\circ \Phi_{n-1}^{-1}
\end{equation}
has the form
\begin{equation}\label{partialnfform}
f^{(n-1)}(w,z,r)=\Big(w,\,q(w)z\big(1+\textstyle\sum_{m\geq n} b^{(n-1)}_m(w,z)\,r^m\big),\,\lambda(w) r\big(1+\sum_{m\geq n} a^{(n-1)}_m(w,z)\,r^m\big)\Big),
\end{equation}
i.e.\ all the terms of order $<n$ have been killed, and where the coefficients $a^{(n-1)}_m(w,z),b^{(n-1)}_m(w,z)$ satisfy property $(\dagger_{n-1})$ for every $m\geq n$. The base case $n=1$ holds with $\Phi_0=\mathrm{id}$, $a^{(0)}_m=a_m$ and $b^{(0)}_m=b_m$, which are holomorphic on $B\times A^\circ(1-\ve,1+\ve)$ and hence satisfy $(\dagger_0)$.

We then look for a formal change of variable
\begin{equation}\label{gn}
	g_n(w,z,r)=\Big(w,z\big(1+r^n\ti{d}_n(w,z)\big), r\big(1+r^n\ti{c}_n(w,z)\big)\Big),
\end{equation}
with $\ti{c}_n,\ti{d}_n$ functions to be determined that will satisfy $(\star_n)$, and we set $f^{(n)}:=g_n\circ f^{(n-1)}\circ g_n^{-1}$.
Ignoring terms involving $r^k, k>n$, we can write
\begin{equation*}
	g_n^{-1}(w,z,r)=(w,z(1-r^n\ti{d}_n(w,z)), r(1-r^n\ti{c}_n(w,z))),
\end{equation*}
and, using that $f^{(n-1)}$ has no terms of order $<n$ in \eqref{partialnfform},
\begin{equation}
	\begin{gathered}
		\label{conjn}
		f^{(n)}(w,z,r)=(g_n\circ f^{(n-1)}\circ g_n^{-1})(w,z,r)=\\
		=\Big(w,q(w)z\big(1+r^n(b^{(n-1)}_n(w,z)-\ti{d}_n(w,z)+\lambda(w)^n\ti{d}_n(w,q(w)z))\big),\\
		\lambda(w)r\big(1+r^n(a^{(n-1)}_n(w,z)-\ti{c}_n(w,z)+\lambda(w)^n\ti{c}_n(w,q(w)z))\big)\Big)+O(r^{n+1}),
	\end{gathered}
\end{equation}
so, in order to kill the order $r^n$ term, we need to choose $\ti{c}_n(w,z),\ti{d}_n(w,z)$ satisfying the homological equation
\begin{equation}\label{system}
\left\{\begin{aligned}
\lambda(w)^n\ti{c}_n(w,q(w)z)-\ti{c}_n(w,z)&=-a^{(n-1)}_n(w,z),\\
\lambda(w)^n\ti{d}_n(w,q(w)z)-\ti{d}_n(w,z)&=-b^{(n-1)}_n(w,z).
\end{aligned}\right.
\end{equation}
For this, we expand $a^{(n-1)}_n,b^{(n-1)}_n$ in convergent Laurent series in $z$
\begin{equation}\label{lau2}
a^{(n-1)}_n(w,z)=\sum_{k\in\mathbb{Z}}a^{(n-1)}_{n,k}(w)z^k, \quad b^{(n-1)}_n(w,z)=\sum_{k\in\mathbb{Z}}b^{(n-1)}_{n,k}(w)z^k,
\end{equation}
with $a^{(n-1)}_{n,k},b^{(n-1)}_{n,k}$ meromorphic in $w\in B$ and holomorphic for $w\not\in P_{\leq n-1}$ (by property $(\dagger_{n-1})$), and seek $\ti{c}_n,\ti{d}_n$ as analogous Laurent series
\begin{equation}\label{lau}
\ti{c}_n(w,z)=\sum_{k\in\mathbb{Z}}\ti{c}_{n,k}(w)z^k, \quad \ti{d}_n(w,z)=\sum_{k\in\mathbb{Z}}\ti{d}_{n,k}(w)z^k,
\end{equation}
which will be shown to satisfy $(\star_n)$.
Equating the $z^k$-terms in \eqref{system}, in order to satisfy \eqref{system} we need to have
\begin{equation}\label{system2}
\left\{\begin{aligned}
(\lambda(w)^nq(w)^k-1)\ti{c}_{n,k}(w)&=-a^{(n-1)}_{n,k}(w),\\
(\lambda(w)^nq(w)^k-1)\ti{d}_{n,k}(w)&=-b^{(n-1)}_{n,k}(w).
\end{aligned}\right.
\end{equation}
The unique solutions to equations \eqref{system2} are the meromorphic functions $\ti{c}_{n,k}(w),\ti{d}_{n,k}(w)$ of $w\in B$ given by
\begin{equation}\label{soln}
\ti{c}_{n,k}(w)=-\frac{a^{(n-1)}_{n,k}(w)}{\lambda(w)^nq(w)^k-1},\quad \ti{d}_{n,k}(w)=-\frac{b^{(n-1)}_{n,k}(w)}{\lambda(w)^nq(w)^k-1},
\end{equation}
which are holomorphic for $w\not\in P_{n,k}\cup P_{\leq n-1}$, hence in particular for $w\not\in P_{\leq n}$.

\noindent
{\bf Property $(\star_n)$ for $\ti{c}_n(w,z)$ and $\ti{d}_n(w,z)$. }
Next, we prove that the resulting functions $\ti{c}_n(w,z)$ and $\ti{d}_n(w,z)$ in \eqref{lau} satisfy $(\star_n)$, namely that given $w\not\in  P_{\leq n}$, then $\ti{c}_n(w,z),\ti{d}_n(w,z)$ are holomorphic in $z$ in an open annulus containing $A_w$.
Indeed, since $w\not\in P_{\leq n-1}$ and $a^{(n-1)}_n,b^{(n-1)}_n$ satisfy $(\dagger_{n-1})$, their Laurent series in \eqref{lau2} converge for all $z$ in an open annulus $A^\circ(1-\ve,1+\ve)$, for some $\ve>0$, and so
\begin{equation*}
	\limsup_{k\to+\infty}|a^{(n-1)}_{n,k}(w)|^{\frac{1}{k}}\leq 1-\ve, \quad \limsup_{k\to+\infty}|a^{(n-1)}_{n,-k}(w)|^{\frac{1}{k}}\leq 1-\ve,
\end{equation*}
and analogously for $b^{(n-1)}_n$.
Recalling \eqref{soln} (and the fact that $\ti{c}_{n,k}(w),\ti{d}_{n,k}(w)$ are holomorphic at $w$ for all $k\in\mathbb{Z}$), and using that $|q(w)|<1$, we have then
\begin{align*}
	\limsup_{k\to+\infty}\left|\frac{a^{(n-1)}_{n,k}(w)}{\lambda(w)^nq(w)^k-1}\right|^{\frac{1}{k}} & \leq 1-\ve,\\
	\limsup_{k\to+\infty}\left|\frac{a^{(n-1)}_{n,-k}(w)}{\lambda(w)^nq(w)^{-k}-1}\right|^{\frac{1}{k}}& \leq |q(w)|-\ve,
\end{align*}
which proves our claim that $\ti{c}_n(w,z)$ and $\ti{d}_n(w,z)$ indeed satisfy $(\star_n)$.

Finally, by the choice of $\ti{c}_n,\ti{d}_n$ the order $r^n$ term of $f^{(n)}$ in \eqref{conjn} vanishes, while its terms of order $<n$ remain zero; thus $f^{(n)}$ again has the form \eqref{partialnfform}, with $n$ replaced by $n+1$. It remains to check that its coefficients $a^{(n)}_m,b^{(n)}_m$ ($m\geq n+1$) satisfy $(\dagger_n)$, which is the inductive hypothesis at the next stage. Expanding $f^{(n)}=g_n\circ f^{(n-1)}\circ g_n^{-1}$ in powers of $r$ and Taylor-expanding $\ti{c}_n,\ti{d}_n$ in the $z$ variable, exactly as in the order $n$ computation \eqref{conjn}, we see that each $a^{(n)}_m,b^{(n)}_m$ is a finite sum of products, each factor of which is a holomorphic function of $w$ times one of: a coefficient $a^{(n-1)}_{m'}(w,z),b^{(n-1)}_{m'}(w,z)$ (with $n\leq m'\leq m$) or a $z$-derivative thereof; a function $\ti{c}_n(w,z),\ti{d}_n(w,z)$ or a $z$-derivative thereof; or a dilation $\ti{c}_n(w,q(w)z),\ti{d}_n(w,q(w)z)$ or such a dilated $z$-derivative. Here the coefficients of $f^{(n-1)}$ are evaluated at the base point $z$, through the inner map $g_n^{-1}$, and so are not dilated, while the dilation by $q(w)$ enters only through the outer composition with $g_n$, just as the term $\lambda(w)^n\ti{d}_n(w,q(w)z)$ arose in \eqref{conjn}. Now the $a^{(n-1)}_{m'},b^{(n-1)}_{m'}$ satisfy $(\dagger_{n-1})$, hence $(\dagger_n)$; the functions $\ti{c}_n,\ti{d}_n$ and their $z$-derivatives satisfy $(\star_n)$, hence $(\dagger_n)$; and their dilations by $q(w)$ satisfy $(\dagger_n)$ by the closure property established above. Since $(\dagger_n)$ is preserved under sums, products, and multiplication by holomorphic functions of $w$, it follows that $a^{(n)}_m,b^{(n)}_m$ satisfy $(\dagger_n)$ for all $m\geq n+1$. This completes the inductive step.

\noindent
{\bf The formal linearization. }
Repeating this for all $n$, we obtain a sequence of formal change of variables $g_n(w,z,r)$ as in \eqref{gn}. By construction $f^{(n)}=\Phi_n\circ f\circ \Phi_n^{-1}$, with $\Phi_n=g_n\circ\cdots\circ g_1$, has all its terms of order $\leq n$ equal to those of the linear map $(w,q(w)z,\lambda(w)r)$, so the desired formal change of variable $g(w,z,r)$ as in \eqref{g}, \eqref{g2} satisfying  \eqref{conj} is obtained as the limit
\begin{equation}\label{compos}
g=\lim_{n\to+\infty}g_n\circ g_{n-1}\circ\cdots \circ g_1,
\end{equation}
which, as we shall see presently, is well-defined because for each $j$ the coefficients of $r^j$ in the formal power series expansion of the components of $g_n\circ g_{n-1}\circ\cdots \circ g_1$ stabilize as $n$ grows.

To conclude the proof, we also need to show that the resulting functions $c_n(w,z)$ and $d_n(w,z)$ in \eqref{g}, \eqref{g2} satisfy property $(\star_n)$, for all $n\geq 1$.

\noindent
{\bf Property $(\star_n)$ for $c_n(w,z)$ and $d_n(w,z)$. }
To see these, for each $p\geq 1$ write
\begin{equation*}
	(g_p\circ \cdots \circ g_1)(w,z,r)=
	\Big(w,z\big(1+r\hat{d}^{(p)}(w,z,r)\big),r\big(1+r\hat{c}^{(p)}(w,z,r)\big)\Big),
\end{equation*}
and expand in formal power series
\begin{equation*}
r\hat{c}^{(p)}(w,z,r)=\sum_{n=1}^\infty \hat{c}^{(p)}_n(w,z)r^n, \quad r\hat{d}^{(p)}(w,z,r)=\sum_{n=1}^\infty \hat{d}^{(p)}_n(w,z)r^n.
\end{equation*}
We then claim that for each $p\geq 1$ we have
\begin{equation}\label{stabilize}
	\hat{c}^{(p+1)}_n(w,z)=\hat{c}^{(p)}_n(w,z), \quad \hat{d}^{(p+1)}_n(w,z)=\hat{d}^{(p)}_n(w,z),\quad\text{for }1\leq n\leq p,
\end{equation}
which is the precise stabilization statement, hence the formal power series in \eqref{g} are well-defined and we have
\begin{equation}
c_n(w,z)=\hat{c}^{(p)}_n(w,z), \quad d_n(w,z)=\hat{d}^{(p)}_n(w,z),\quad\text{for }1\leq n\leq p,
\end{equation}
and we also claim that for every $p\geq 1$ and every $n\geq 1$, the functions $\hat{c}^{(p)}_n(w,z),\hat{d}^{(p)}_n(w,z)$
can be expressed as finite sums and products of the functions $\ti{c}_m(w,z), \ti{d}_m(w,z), m\leq n$ and their derivatives in the $z$ variable. Together, these claims show that the functions $c_n(w,z), d_n(w,z)$ indeed satisfy property $(\star_n)$ as desired.

We will prove these two claims by induction on $p\geq 1$. The base case holds because
\begin{equation}
r\hat{c}^{(1)}(w,z,r)=rc_1(w,z),\quad r\hat{d}^{(1)}(w,z,r)=rd_1(w,z).
\end{equation}

For the induction step, we have
\begin{equation}\begin{gathered}
(g_{p+1}\circ g_p\circ \cdots \circ g_1)(w,z,r)=g_{p+1}(w,z(1+r\hat{d}^{(p)}(w,z,r)),r(1+r\hat{c}^{(p)}(w,z,r)))\\
=(w,z(1+r\hat{d}^{(p)}(w,z,r))(1+r^{p+1}(1+r\hat{c}^{(p)}(w,z,r))^{p+1}\ti{d}_{p+1}(w,z(1+r\hat{d}^{(p)}(w,z,r)))),\\
\ \ \ \ \ r(1+r\hat{c}^{(p)}(w,z,r))(1+r^{p+1}(1+r\hat{c}^{(p)}(w,z,r))^{p+1}\ti{c}_{p+1}(w,z(1+r\hat{d}^{(p)}(w,z,r))))),
\end{gathered}
\end{equation}
so that
\begin{equation}\begin{gathered}
1+r\hat{c}^{(p+1)}(w,z,r)=\\
=(1+r\hat{c}^{(p)}(w,z,r))(1+r^{p+1}(1+r\hat{c}^{(p)}(w,z,r))^{p+1}\ti{c}_{p+1}(w,z(1+r\hat{d}^{(p)}(w,z,r)))),
\end{gathered}
\end{equation}
\begin{equation}\begin{gathered}
	1+r\hat{d}^{(p+1)}(w,z,r)= \\
	=(1+r\hat{d}^{(p)}(w,z,r))(1+r^{p+1}(1+r\hat{c}^{(p)}(w,z,r))^{p+1}\ti{d}_{p+1}(w,z(1+r\hat{d}^{(p)}(w,z,r)))).
\end{gathered}\end{equation}
Expanding both sides in formal power series in $r$, using the expansion
\begin{equation}
	\ti{c}_{p+1}(w,z(1+r\hat{d}^{(p)}(w,z,r)))=\ti{c}_{p+1}(w,z)+\sum_{\ell\geq 1}\frac{\partial^\ell_z\ti{c}_{p+1}(w,z)}{\ell!}(zr\hat{d}^{(p)}(w,z,r))^\ell,
\end{equation}
and the analogous one for $\ti{d}_{p+1}$, and equating terms with the same powers of $r$ we see that \eqref{stabilize} indeed holds, and also that
for any $n\geq p+1$, the functions $\hat{c}^{(p+1)}_{n}(w,z),\hat{d}^{(p+1)}_{n}(w,z)$ can be expressed as finite sums and products of the functions $\ti{c}_{p+1}(w,z), \ti{d}_{p+1}(w,z)$ and their derivatives in the $z$ variable, as well as $\hat{c}^{(p)}_m(w,z),\hat{d}^{(p)}_m(w,z), m\leq n,$. Using the induction hypothesis, this proves the desired claims, and completes the proof of the Proposition. \end{proof}

\section{Convergence and the radius function}\label{sec4}
\subsection{The radius function}
Given a fixed preliminary normal form as in \autoref{siu}, in \autoref{sec3} we have constructed uniquely a formal change of variable
\begin{equation}\label{linear}
g(w,z,r)=(w,z(1+rd(w,z,r)), r(1+rc(w,z,r))),
\end{equation}
as in \autoref{change}, so that the linearization property \eqref{conj} holds, where $c(w,z,r),d(w,z,r)$ are formal power series in $r$
\begin{equation}
d(w,z,r)=\sum_{n=0}^\infty d_n(w,z)r^n, \quad c(w,z,r)=\sum_{n=0}^\infty c_n(w,z)r^n,
\end{equation}
and the functions $c_n,d_n$ have the Laurent expansions in \eqref{laur} with meromorphic coefficients $c_{n,k}, d_{n,k}$, such that for every $w\not\in P_{\leq n}$ the functions $c_n(w,z),d_n(w,z)$ are holomorphic in $z$ in an open annulus containing $A_w=\{|q(w)|\leq |z|\leq 1\}$. Up to shrinking $B$, we can fix a closed annulus $A$ such that $A_w\subset A^\circ$ for all $w\in B$, and for every $w\not\in P_{\leq n}$ the function $c_n(w,z),d_n(w,z)$ are holomorphic in $z$ in an open annulus containing $A$.


\subsubsection{The radius function}
	\label{sssec:The-radius-function}

The key object we will need is the following ``radius function'' $\rho:B\to[0,+\infty]$.
Recall that $P_{\leq n}$ is the locally finite divisor of possible poles for the coefficients $c_n(w,z), d_n(w,z)$, and we set $P:=\bigcup_n P_{\leq n}$, which need not be locally finite anymore.
We first define $\rho$ for $w\in B\backslash P$ by
\begin{equation}\begin{split}
	\rho(w):=\min\bigg\{\left(\sup_{n\geq 1}\sup_{z\in A}|c_n(w,z)|^{\frac{1}{n}}\right)^{-1},\left(\sup_{n\geq 1}\sup_{z\in A}|d_n(w,z)|^{\frac{1}{n}}\right)^{-1}\bigg\},
\end{split}
\end{equation}
with the convention that $\frac{1}{+\infty}=0, \frac{1}{0}=+\infty$. From \autoref{discr} we see that $ P\subset B$ is a countable union of divisors, hence in particular it has empty interior. For any $w\in  P$ we then define
\begin{equation}\label{extend}
	\rho(w):=\lim_{r\downarrow 0} \sup_{|w'-w|<r, w'\in B\backslash P}\rho(w').
\end{equation}

\subsection{Upper semicontinuity of the radius function and convergence}
The following are the key properties of the radius function $\rho$. We equip $B$ with its standard complex manifold topology.

\begin{proposition}\label{properties}
The function $\rho:B\to[0,+\infty]$ satisfies that:
\begin{itemize}
	\item[(a)] $\rho$ is upper semicontinuous,
	\item[(b)] Given $w\in B\backslash P$, we have $\rho(w)>0$ if and only if the embedding $\iota_w:E_w\to \mathcal{S}_w$ is linearizable,
	\item[(c)] Given $w\in P$, if $\rho(w)>0$ then the embedding $\iota_w:E_w\to \mathcal{S}_w$ is linearizable.
\end{itemize}
\end{proposition}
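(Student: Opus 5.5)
The plan is to prove the three properties separately, using throughout the uniqueness of the formal linearization $g$ from \autoref{sec3} and the fixed closed annulus $A$ with $A_w\subset A^\circ$ for all $w\in B$.

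For (a), I first show upper semicontinuity on $B\setminus P$. Each $c_n$ (and likewise $d_n$) has Laurent coefficients holomorphic off $P_{\leq n}$, and the Laurent series converges on an open annulus containing $A$. Hence $c_n(w,z)$ is jointly holomorphic on a neighborhood of $(B\setminus P_{\leq n})\times A$; this needs the locally uniform coefficient bounds from the recursive construction, which I will extract from the proof of the previous Proposition. Since $A$ is compact, $w\mapsto \sup_{z\in A}|c_n(w,z)|^{1/n}$ is continuous on $B\setminus P_{\leq n}\supset B\setminus P$. A supremum over $n$ of continuous functions is lower semicontinuous, so its reciprocal is upper semicontinuous, and a minimum of two upper semicontinuous functions is upper semicontinuous. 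Thus $\rho|_{B\setminus P}$ is upper semicontinuous in the relative topology.

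The extension \eqref{extend} is the usual upper envelope from the dense set $B\setminus P$. If $w_j\to w$ with $w_j\in P$, then each $\rho(w_j)$ is approximated by values $\rho(w_j')$ with $w_j'\in B\setminus P$ and $w_j'\to w$. This gives $\limsup\rho(w_j)\leq \limsup_{w'\to w,\,w'\notin P}\rho(w')$. The right-hand side equals $\rho(w)$: for $w\in P$ this is the definition, and for $w\notin P$ it follows from the relative semicontinuity just proved. Together these give (a).

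For the forward direction of (b), let $w\notin P$ with $\rho(w)>0$. Then $|c_n(w,z)|,|d_n(w,z)|\leq \rho(w)^{-n}$ for $z\in A$, so the series for $c$ and $d$ converge on a neighborhood of $A\times\{|r|<\rho(w)\}$. The resulting holomorphic map $g_w$ satisfies $g_w\circ f_w=L_w\circ g_w$, where $L_w(z,r)=(q(w)z,\lambda(w)r)$. This identity holds as convergent series because it holds formally, by \eqref{conj}. Since $g_w$ is the identity on $r=0$ with invertible differential there, it is a local biholomorphism, and by compactness it is injective on a thin neighborhood of $\{r=0\}$. It therefore descends through the patching of \autoref{sssec:Patching-a-surface} to a biholomorphism onto a neighborhood of the zero section in the model obtained by gluing with $L_w$. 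That model is exactly the total space of $N_{E_w/\mathcal{S}_w}$ by \eqref{glue}, so $\iota_w$ is linearizable.

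For the converse, take a linearization, lift it to the $\mathbb{Z}$-cover and write it in normal-form coordinates as $h$. Composing with a fiberwise scalar, which is an automorphism of the normal bundle, I can arrange that $h$ has the shape \eqref{g}. Then $h$ conjugates $f_w$ to $L_w$. Comparing $z^k r^n$ coefficients gives the equations \eqref{system2} order by order, and since $\lambda(w)^nq(w)^k\neq1$ their solution is unique, so $h$ has Taylor coefficients $c_n(w,\cdot),d_n(w,\cdot)$. Because $h$ is holomorphic and bounded by some $M$ on a neighborhood of $A\times\{|r|\leq\delta\}$, the Cauchy estimates give $|c_n|,|d_n|\leq M\delta^{-n}$ on $A$. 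Hence $\sup_n (M\delta^{-n})^{1/n}<\infty$ and $\rho(w)>0$.

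The delicate point in this converse is the normalization: I must make sure the lifted linearization can indeed be put in the form \eqref{g}. Its $z$-component must be $z(1+O(r))$ after a possible deck translation, and its $r$-component must be $r(1+O(r))$ after the scalar rescaling.

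For (c), let $w_0\in P$ with $\rho(w_0)>0$. Choose $w_j\notin P$ with $w_j\to w_0$ and $\rho(w_j)\geq\rho_0>0$. The maps $g_{w_j}$ are then holomorphic and uniformly bounded on $A\times\{|r|<\rho_0/2\}$. By Montel, a subsequence converges to a holomorphic $g_\infty$ of the form \eqref{g}, and passing to the limit gives $g_\infty\circ f_{w_0}=L_{w_0}\circ g_\infty$ near $A_{w_0}\times\{0\}$. Here I use that $A_{w_0}\subset A^\circ$ and that $f$ depends holomorphically on $w$. The injectivity and descent argument from (b) then linearizes $\iota_{w_0}$.

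I expect the main obstacle to be the domain bookkeeping in (b) and (c). The conjugacy must hold on a region large enough that $g$ is compatible with the gluing: it must cover both $|z|=1$ and its image near $|z|=|q(w)|$, so that $g$ descends. The injectivity radius near $r=0$ must also be controlled uniformly in $w_j$.
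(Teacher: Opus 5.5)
Your proposal is correct and follows essentially the same route as the paper. That route is: upper semicontinuity on $B\setminus P$ via the reciprocal of a supremum over $n$, then the usc envelope across $P$; convergence of the formal linearization from $|c_n|,|d_n|\leq\rho(w)^{-n}$; a normal-families limit along $w_j\to w_0$ for (c); and uniqueness of the formal solution plus Cauchy estimates for the converse in (b). You also make explicit steps the paper leaves implicit: continuity in $w$ for (a), normalizing the linearization by a deck translation and a constant scalar so it has the shape $(z(1+rd),\,r(1+rc))$, and the injectivity/descent step. Your worry about controlling injectivity uniformly in $w_j$ in (c) is unnecessary, since that argument can be applied directly to the limit map $g_\infty$, whose differential along $r=0$ is invertible.
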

\begin{proof}
(a) First, we show that for each $n$ the function given by $w\mapsto \sup_{z\in A}|c_n(w,z)|^{\frac{1}{n}}$ is lower semicontinuous on $B\backslash  P_{\leq n}$. Indeed, given a point $w_0$ there, thanks to \autoref{discr} we have that $w\not\in  P_{\leq n}$ for all $w$ close to $w_0$, hence for all such $w$ the Laurent series defining $c_n(w,z)$ converges for $z\in A$ and so $z\mapsto |c_n(w,z)|$ is a continuous function on $A$. Since the pointwise supremum of a family of lower semicontinuous functions is also lower semicontinuous, this proves this claim.

It follows that $w\mapsto \left(\sup_{n\geq 1}\sup_{z\in A}|c_n(w,z)|^{\frac{1}{n}}\right)^{-1}$ is upper semicontinuous on $B\backslash P$, and hence so is $\rho$. From its definition, and using that $P\subset B$ has empty interior, it is then elementary to check that $\rho$ is upper semicontinuous on all of $B$ (the extension in \eqref{extend} is a standard construction in real analysis, known as usc extension or usc envelope). This proves part (a).

For parts (b) and (c), suppose that $R:=\rho(w)>0$ for some $w\in B$, and let us first assume that $w\not\in  P$. Then for each $n\in\mathbb{N}$ the functions $c_n(w,z)$ and $d_n(w,z)$ are holomorphic in $z$ in an open annulus containing $A$, and for all $z$ in this annulus we have
\begin{equation}
\left(\sup_{n\geq 1}\sup_{z\in A}|c_{n}(w,z)|^{\frac{1}{n}}\right)^{-1}\geq R,\quad
\left(\sup_{n\geq 1}\sup_{z\in A}|d_{n}(w,z)|^{\frac{1}{n}}\right)^{-1}\geq R.
\end{equation}
Since $\limsup_{n}\leq \sup_{n}$, these imply that the power series
\begin{equation}\label{power}
c(w,z,r)=\sum_{n\geq 0}c_n(w,z)r^n, \quad d(w,z,r)=\sum_{n\geq 0}d_n(w,z)r^n,
\end{equation}
converge on the disc of radius $R$, uniformly as $z$ varies in $A$. Thus, for this value of $w$, the formal linearization in \eqref{linear} is an analytic linearization.

If on the other hand we have $w\in  P$, then by the definition of $\rho$ 
we can find a sequence $w_i\to w$, $w_i\not\in  P$ such that $\rho(w_i)\geq \frac{R}{2}$ for all $i$, hence
\begin{equation}
\left(\sup_{n\geq 1}\sup_{z\in A}|c_{n}(w_i,z)|^{\frac{1}{n}}\right)^{-1}\geq \frac{R}{2},\quad
\left(\sup_{n\geq 1}\sup_{z\in A}|d_{n}(w_i,z)|^{\frac{1}{n}}\right)^{-1}\geq \frac{R}{2},
\end{equation}
hold for all $i$.
As above, these imply that the power series for $c(w_i,z,r)$ and $d(w_i,z,r)$ converge on the disc of radius $\frac{R}{2}$ uniformly as $z$ varies in $A$ and uniformly in $i$.
Passing to a subsequence in $i$, we can thus take uniform limits $\lim_i c(w_i,z,r), \lim_i d(w_i,z,r)$ to get holomorphic functions on $z\in A,$ and $r\in D^\circ(R/2)$ which give us an analytic linearization for this value of $w$.

Lastly, if $w\not\in P$ and if the embedding $\iota_w$ is linearizable, then there is a change of variable $\ti{g}(w,z,r)$, holomorphic in $z,r$, which applied to the preliminary normal form in \autoref{siu} linearizes it, namely
\begin{equation}\label{conjj}
(\ti{g}\circ f\circ \ti{g}^{-1})(w,z,r)=(w, q(w)z,\lambda(w)r).
\end{equation}
Since the formal change of variable $g$ in \eqref{linear} satisfies the same linearization equation \eqref{conj}, and since we have seen that the formal solution of this is unique, it follows that $\ti{g}=g$, and so $g(w,z,r)$ is holomorphic in $z,r$. In particular, there is some $R>0$ such that for all $z$ in the annulus $A$, the corresponding power series in \eqref{power} converges on the disc of radius $R$. Cauchy's estimates then provide us bounds for the coefficients $c_{n}(w,z), d_{n}(w,z)$ of the form
\[|c_n(w,z)|^{\frac{1}{n}}\leq C, \quad |d_n(w,z)|^{\frac{1}{n}}\leq C,\]
for a constant $C$ independent of $n$ and $z$, which imply that $\rho(w)\geq R'>0$ for some $R'>0$.
\end{proof}

\begin{remark}[Radius of convergence and $\rho$]
	\label{rmk:Radius-of-convergence-and-rho}
	We recall that the radius of convergence of a power series is defined using a $\limsup$, whereas our definition of $\rho$ uses a $\sup$.
	In general, the radius of convergence is not an upper semicontinuous function of parameters, whereas the function $\rho$ is, and importantly, vanishing of $\rho$ is equivalent to vanishing of the radius of convergence.

	The following simple example:
	\begin{equation}
		f_t(z)=\sum_{n=0}^\infty e^{n^2 t}z^n, \quad t\in\mathbb{C},
	\end{equation}
	exhibits a formal power series whose coefficients depend holomorphically in $t$ but whose radius of convergence is not an upper semicontinuous function of $t$.
	Indeed $f_0$ has radius of convergence $1$ while $f_{t}$ has infinite radius of convergence whenever $\Re t<0$.
\end{remark}
	
\section{Proof of the main theorem}\label{sec5}
\subsection{Isotrivial Halphen pencils}

\subsubsection{The family picture}
	\label{sssec:setup_the_universal_family}
Fix a smooth cubic curve $E\subset \bP^2$.
Denote by $E^{(9)}$ the open subset of $E^{9}$ consisting of $9$-tuples of distinct points.
For any $p\in E^{(9)}$ denote the blowup of $\bP^2$ at $p$ by
\[
	\cS_p:=Bl_p \bP^2 \text{ and by }\cE_p\text{ the strict transform of }E.
\]
Note that by construction the normal bundle has degree zero: $\deg N_{\cE_p/\cS_p}=0$.

\subsubsection{Line bundles}
	\label{sssec:line_bundles}
Recall that $K_{\bP^2}\vert_E$, the canonical bundle of $\bP^2$ restricted to $E$, has degree $-9$.
Also, $9$ distinct points on $E$ determine naturally a degree $9$ line bundle, so we can consider the composition
\begin{equation}
	\label{eqn_cd:nine_points_to_Pic0}
\begin{tikzcd}[column sep = large]
	E^{(9)}
	\arrow[r, ]
	\arrow[rr, bend right, "\chi"]
	&
	\Pic^9(E)
	\arrow[r, "\otimes K_{\bP^2}\vert_E"]
	&
	\Pic^0(E)
\end{tikzcd}
\end{equation}
Then $\chi$ is an open map from an open subset of $E^9$ to the complex $1$-dimensional torus $\Pic^0(E)$.

If $\pi:\mathcal{S}_p\to\mathbb{P}^2$ denotes the blowup map, then $\pi|_{\mathcal{E}_p}:\mathcal{E}_p\to E$ is an isomorphism and we will use the induced pullback map to identify $\mathrm{Pic}^0(\mathcal{E}_p)$ with $\mathrm{Pic}^0(E)$. Under this identification, we have that
\begin{equation}
N_{\cE_p/\cS_p}=\mathcal{O}_{\mathbb{P}^2}(3)|_E \otimes\mathcal{O}_E\left(-\sum_{j=1}^9 p_j\right)=-\chi(p).
\end{equation}

\subsubsection{Isodesmic divisors}
	\label{sssec:isodesmic_divisors}
For any $L\in \Pic^0(E)$ denote by $D_L:=\chi^{-1}(L)\subset E^{(9)}$, which is visibly a smooth divisor in $E^{(9)}$.
Furthermore, as $L$ ranges over all torsion line bundles in $\Pic^0(E)$, the set
\[
	\mathcal{T}:=\bigcup_{\substack{L\in \Pic^0(E)\\ \text{torsion}}}\hspace{-1em} D_L \hspace{1em}\text{ is dense in }E^{(9)}.
\]
We can write
\[
	\mathcal{T}=\bigcup_{n\geq 1} \mathcal{T}_n,\quad \mathcal{T}_n:=\bigcup_{\substack{L\in \Pic^0(E)\\ n\text{-torsion}}}\hspace{-1em} D_L,
\]
and each $\mathcal{T}_n$ is a finite union of complex hypersurfaces, so it is closed and with empty interior in $E^{(9)}$.
Recall that if $p\in D_L$ is a $9$-point configuration with $L$ torsion, and $\mathcal{S}_p\to\mathbb{P}^2$ is the blowup of $p$, then \autoref{neeman} gives us an elliptic fibration $f_p:\mathcal{S}_p\to\mathbb{P}^1$. The following will be a crucial ingredient for our main theorem:

\begin{proposition}\label{dense}
If $E\subset \mathbb{P}^2$ is a cubic curve then the set
\[\mathcal{T}_{\rm ni}:=\left\{p\in \bigcup_{\substack{L\in \Pic^0(E)\\ \text{torsion}}}\hspace{-1em} D_L\ \bigg|\ f_p\text{ is not isotrivial}\right\}\subset\mathcal{T}\subset E^{(9)}\]
is dense in $E^{(9)}$.
\end{proposition}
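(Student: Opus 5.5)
Since $\mathcal{T}$ is dense in $E^{(9)}$, it suffices to show that for each torsion $L$ the isotrivial locus inside $D_L$ is a proper closed analytic subset of every component of $D_L$. Then $\mathcal{T}_{\rm ni}\cap D_L$ is dense in $D_L$, and density of $\mathcal{T}$ gives density of $\mathcal{T}_{\rm ni}$. The plan is therefore to fix $n$ and an $n$-torsion $L$, and to study the family of Halphen pencils $f_p$, $p\in D_L$.

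First I will show that the isotrivial locus is closed analytic in $D_L$. Over $D_L$ the linear systems $|-n K_{\mathcal S_p}|$ form a holomorphic family of pencils: the relevant $h^0$ is constant, equal to $2$, by \autoref{neeman}. So the $j$-invariant of the fibration gives a meromorphic function $j(p,t)$ on $D_L\times\mathbb{P}^1$. Isotriviality of $f_p$ means $\partial_t j(p,\cdot)\equiv 0$, which is an analytic condition in $p$. Since $D_L$ is connected (it is a translate of the kernel of the addition map restricted to an open set, and its irreducible components are finitely many), it then suffices to exhibit a single non-isotrivial $p$ in each component. Alternatively, I will show the isotrivial locus has dimension smaller than $\dim D_L=8$.

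For the dimension bound I would use the classification of isotrivial rational elliptic surfaces and Halphen surfaces of index $n$. An isotrivial Halphen pencil has all smooth fibers isomorphic to a fixed curve. Its fiber $nC$ lies over the point where $C\cong E$, so the generic fiber must be isomorphic to $E$. Moreover, the Jacobian fibration is an isotrivial rational elliptic surface with constant $j=j(E)$. Such surfaces are quotients $(E\times\widetilde{\mathbb{P}^1})/G$ with $G$ cyclic of order $2$, $3$, $4$ or $6$. They come in families with only finitely many continuous parameters: the positions of the singular fibers, at most about $4$ moduli. Halphen surfaces of index $n$ with a given Jacobian are, by Ogg–Shafarevich/Dolgachev, parametrized by the choice of a torsion twist and the fiber position, which adds at most $1$ parameter. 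Reconstructing $p\in E^{(9)}$ from $(S,C)$ then involves finitely many choices of contraction to $\mathbb{P}^2$, up to $\mathrm{Aut}(E)$ and the translations fixing the embedding $E\subset\mathbb{P}^2$, which form a finite group. So the isotrivial locus has dimension at most roughly $5<8$, and is therefore nowhere dense in $D_L$.

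The main obstacle is making this count rigorous: one has to bound the moduli of isotrivial Halphen pencils of arbitrary index $n$, and control the finitely many ways the surface arises as a blowup of $\mathbb{P}^2$ at points on the fixed cubic $E$. If this is awkward, I would instead try a direct deformation argument. Take any $p\in D_L$. Then move a subset of the points along $D_L$, for example via $p_1\mapsto p_1+x$ and $p_2\mapsto p_2-x$ in the group law, which keeps $\chi$ fixed. Show that the singular-fiber configuration, or the $j$-map degree, cannot remain constant: for an isotrivial pencil the discriminant consists of a bounded number of points, whereas generic deformations produce nodal $I_1$ fibers.
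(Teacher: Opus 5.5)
\textbf{There is a genuine gap: the one step that carries the content of the proposition is never proved.} Your reduction is sound. The isotrivial locus is a closed analytic subset of $D_L$. Moreover $D_L$ is irreducible, being an open dense subset of a translate of $\{x\in E^9:\sum x_i=0\}\cong E^8$. So it suffices to show that this locus is not all of $D_L$. That is exactly what remains unproved.

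For $L=\mathcal{O}$ it is easy. Every $p\in D_{\mathcal{O}}$ is $E\cap F$ for some cubic $F$, and a general cubic pencil through $E$ has twelve nodal fibers, so it is not isotrivial. For $L$ of order $n\geq 2$, you neither produce a non-isotrivial member of $D_L$ nor give a rigorous dimension bound, and neither of your two routes closes this:
\begin{itemize}
\item The dimension count is heuristic by your own admission. One of its steps is also false: a Halphen surface generally has infinitely many $(-1)$-curves and infinitely many blow-downs to $\mathbb{P}^2$. The Mordell--Weil group of the Jacobian acts by automorphisms, and it is infinite even for isotrivial examples such as $y^2=x^3+f(t)$ with six type~II fibers, where it has rank $8$. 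So ``finitely many choices of contraction'' must become ``countably many'', and that needs a further argument.
\item The deformation fallback is circular. The claim that generic deformations inside $D_L$ produce $I_1$ fibers is precisely the non-isotriviality you are trying to establish.
\end{itemize}

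\textbf{How the paper makes the count rigorous.} It avoids building moduli of isotrivial Halphen pairs $(S,C)$ and reconstructing $p$ from them, and instead goes in the opposite direction.
\begin{enumerate}
\item It defines the morphism $\mathcal{T}_n\to\mathcal{R}_n$, $p\mapsto\mathrm{Jac}(f_p)$ together with its marked fiber. Here $\mathcal{R}_n$ is the $8$-dimensional (Miranda-type) parameter space of rational elliptic surfaces with a section and a marked fiber isomorphic to $E$.
\item It uses that $f_p$ is a logarithmic transform of $\mathrm{Jac}(f_p)$ along that fiber, so $f_p$ is isotrivial if and only if $\mathrm{Jac}(f_p)$ is.
\item It reads off from the explicit Weierstra\ss\ forms $y^2=x^3+at^2x+bt^3$, $y^2=x^3+f(t)$ with $\deg f\leq 6$, and $y^2=x^3-t(t-1)(t-\lambda)x$ (plus the marked fiber) that the isotrivial locus in $\mathcal{R}_n$ is Zariski closed of dimension at most $4$.
\end{enumerate}

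To pull this bound back to $\mathcal{T}_n$, the fibers of $\mathcal{T}_n\to\mathcal{R}_n$ must be finite. Your reconstruction idea does this job once it is corrected. Given the marked Jacobian, there are finitely many choices of $n$-torsion point, countably many blow-downs, finitely many orderings of the nine points, and a finite group of projectivities preserving $E$. So each fiber is a countable algebraic set, hence finite.
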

The proof below was communicated to us by Matthias Sch\"utt, whom we are very grateful to, and it improved our original argument which only applied to a very general cubic.
\begin{proof}
For each $n\geq 1$, we have that $\mathcal{T}_n$ is an $8$-dimensional quasiprojective variety. Let $\mathcal{T}_n^{\rm iso}\subset\mathcal{T}_n$ be the configurations $p\in\mathcal{T}_n$ such that the corresponding elliptic fibration $f_p$ is isotrivial. The main claim is then that for every $n\geq 1,$ we have that $\mathcal{T}_n^{\rm iso}$ is a closed algebraic subvariety of $\mathcal{T}_n$ with positive codimension. Given this, it follows right away that
\begin{equation}
\mathcal{T}_{\rm ni}=\bigcup_{n\geq 1}(\mathcal{T}_n\backslash\mathcal{T}_n^{\rm iso})
\end{equation}
is still dense in $E^{(9)}$, as desired.

Now the proof of the claim. Assume first that $n\geq 2$ (which is anyway sufficient to prove the above density statement). Given any $p\in \mathcal{T}_n$, we have the corresponding elliptic fibration $f_p:\mathcal{S}_p\to\mathbb{P}^1$, which contains $nE$ as a fiber (say over $q\in\mathbb{P}^1$). Let $\mathrm{Jac}(f_p):\mathcal{S}'_p\to\mathbb{P}^1$ be its Jacobian fibration, which is a rational elliptic surface with a section. Then, as mentioned for example in \cite[Rmk 3.2]{DLPU}, there exists an $n$-torsion point on the fiber $\mathrm{Jac}(f_p)^{-1}(q)\cong E$ such that performing a logarithmic transformation to $\mathcal{S}'_p$ along this fiber with this choice of torsion point recovers the original elliptic fibration $f_p:\mathcal{S}_p\to\mathbb{P}^1$. Furthermore, $f_p$ is isotrivial if and only if $\mathrm{Jac}(f_p)$ is. See also \cite[\S 1.6]{FM} for background on logarithmic transformations.

There is an $8$-dimensional quasiprojective parameter space $\mathcal{R}_n$ for rational elliptic surfaces with a section and with a marked fiber isomorphic to $E$, following \cite{Miranda}, but then also taking the marked fibre into account by way of some M\"obius transformation. Mapping $f_p$ to $\mathrm{Jac}(f_p)$ gives a morphism $\mathcal{T}_n\to \mathcal{R}_n$, which maps $\mathcal{T}_n^{\rm iso}$ into $\mathcal{R}_n^{\rm iso}$, the locus of isotrivial elliptic surfaces. It will then suffice to show that $\mathcal{R}_n^{\rm iso}$ is a closed algebraic subvariety of $\mathcal{R}_n$ with positive codimension (in fact, codimension at least $4$).

Now, every elliptic surface $f':\mathcal{S}'\to\mathbb{P}^1$ in $\mathcal{R}_n$ can be given in Weierstra\ss\ form:
\[
y^2 = x^3 + A(t)x + B(t),
\]
and if $f'$ is isotrivial then its Weierstra\ss\ form can be given explicitly, depending on the (constant) value of the $j$-invariant of the smooth fibers as follows:

If $j\neq 0, 12^3,$ then the Weierstra\ss\ form is
\begin{equation*}
	y^2 = x^3 + at^2x + bt^3,
\end{equation*}
with $a,b\in\mathbb{C}^\times$.
Indeed, the constancy of $j(t):=\tfrac{1728 \cdot 4A^3}{4A^3 + 27B^2}$ forces $A(t)\propto p(t)^2$ and $B(t)\propto p(t)^3$ for some rational function $p(t)$.
Because the underlying surface is rational and minimal, an Euler characteristic calculation shows that the fibration must have only two singular fibers of type $I_0^*$, so $p$ is of degree at most $1$, so by a M\"obius transformation we can assume $p(t)=t$.


If $j=0, 12^3$, then the special shape of the generic fiber
(admitting extra automorphisms) allows for explicit additional parameters as follows:

If $j=0$ then the Weierstra\ss\ form of $\mathcal S'$ is given by
\begin{equation*}
y^2 = x^3 + f(t),
\end{equation*}
where $f$ is a polynomial of degree at most $6$, which is not a perfect sixth power.

If $j=12^3$ then the Weierstra\ss\ form of $\mathcal S'$ is given by
\begin{equation*}
y^2 = x^3 - t(t-1)(t-\lambda)x.
\end{equation*}

According to our setup, we still have to account for the marked fibre,
but with all smooth fibres isomorphic, this simply amounts to fixing $q\neq 0,\infty$ (for $j\neq 0,12^3$)
resp.\ $f(q)\neq 0$ (for $j=0$) resp.\ $q\neq 0,1,\lambda,\infty$ (for $j=12^3$).

In all cases, the resulting parameter space $\mathcal{R}_n^{\rm iso}$ is Zariski closed in $\mathcal{R}_n$,
by inspection of \cite{Miranda} (cf. also the argument in \cite[Prop.6.1]{KLM}), enriched by the marking. Since $\mathcal{R}_n^{\rm iso}$ has dimension at most $4$,
it has positive codimension in $\mathcal{R}_n$ as claimed. Lastly, when $n=1$ there is no need to pass to the Jacobian surface since $f_p$ itself has a section, and the statement follows directly from the argument we just used.
\end{proof}


\subsection{The main result}
We now have all the ingredients to prove our main \autoref{main} and its \autoref{coro}.
\subsubsection{Proof of \autoref{main}}
 Let $E\subset\mathbb{P}^2$ be smooth cubic curve. Thanks to \autoref{dense} we know that $\cT_{\rm ni}$ is dense in $E^{(9)}$. Let $E^{(9)}\times \bP^2$ denote the trivial family given by the fixed surface $\bP^2$ and let $\cP\subset E^{(9)}\times \bP^2$ denote the tautological family of points, whose fiber over the $9$-tuple $p\in E^{(9)}$ consists of the $9$ points in $\bP^2$.
This is a codimension $2$ submanifold.
Blow it up and set
\[
	\cS:=Bl_\cP (E^{(9)}\times \bP^2) \text{ equipped with maps }\pi \colon \cS \to \bP^2\text{ and }p:\cS\to E^{(9)},
\]
and let $\iota:\cE\hookrightarrow \cS$ be the inclusion of the strict transform of $E^{(9)}\times E$. We also have the natural map $h:\cE\to E^{(9)}$. The notation is consistent with that of \autoref{sssec:setup_the_universal_family} if we denote by $\cS_p$, respectively $\cE_p$, the fibers of $\cS$ and $\cE$ over $p\in E^{(9)}$.

The normal bundle of $\cE$ inside $\cS$ is then given by
\[
	N_{\cE/\cS} = (\pi^* K_{\bP^2}
+\mathcal{F})^{\dual}\vert_\cE,
\]
where $\cF\subset \cS$ is the exceptional divisor of the blowup.

We may assume without loss of generality that $E^{(9)}$ is just a ball $B\subset\mathbb{C}^9$. This setup then matches precisely the assumptions in \autoref{siu}, and so the results in sections \ref{sec2}, \ref{sec3} and \ref{sec4} give us an upper semicontinuous radius function $\rho:B\to[0,+\infty]$ such that if $p\in B$ then $\rho(p)>0$ implies that the embedding $\cE_p\subset \cS_p$  is linearizable, with the converse also holding if $p\in B\backslash\mathcal{T}$.

As we have seen in \autoref{isotr}, $\rho|_{\cT_{\rm ni}}\equiv 0$, so the set $\{\rho=0\}$ is dense in $B$, and since $\rho$ is upper semicontinuous, it is a $G_\delta$ set. This means that $\{\rho>0\}$ is an $F_\sigma$ set with empty interior.

On the other hand, for each $n\geq 1$ the set $\cT_n$ is closed and with empty interior, so by Baire $\cT$ is an $F_\sigma$ set with empty interior. Hence
$\{\rho>0\}\cup \cT$ is also $F_\sigma$ with empty interior, and its complement $\{\rho=0\}\backslash \cT$ is thus a dense $G_\delta$, as desired.

\subsubsection{Proof of \autoref{coro}}
Let $E\subset\mathbb{P}^2$ be a smooth cubic with a configuration of $9$ distinct points to which \autoref{main} applies. Thus, the resulting embedding $C\subset S$ of the strict transform of $E$ into the rational surface $S$ has non-torsion normal bundle $N_{C/S}$, hence the embedding is formally linearizable by Arnold's \autoref{formal}, but it is not linearizable.

Let $S'=\mathbb{P}(N_{C/S}\oplus\mathcal{O})$ be the projective compactification of the total space of $N_{C/S}$ and $C'\subset S'$ the embedding of the zero section. Then the germs of embeddings $C\subset S$ and $C'\subset S'$ are formally isomorphic at the level of formal completions, but are not biholomorphic, as desired.


					\section{Complements and Questions}
					\label{sec:Complements-and-Questions}

\subsection{Brjuno's condition}
Given a smooth cubic curve $E\subset\mathbb{P}^2$, a line bundle $L\in\mathrm{Pic}^0(E)$ and a translation-invariant distance $d$ on $\mathrm{Pic}^0(E)$, define the small divisors
\begin{equation*}
	\Omega_n(L)=\min_{1\leq \ell\leq n}d(\mathcal{O}_E,L^{\ell}).
\end{equation*}
\begin{question}
Is it true that the Brjuno condition
\begin{equation*}
	\sum_{k=1}^\infty \frac{-\log\Omega_{2^k}(L)}{2^k}<\infty
\end{equation*}
holds for $L$ if and only if {\em every} configuration of $9$ distinct points on $E$ which has $N_{C/S}=L$ is linearizable?
\end{question}
Here we use the same notation as before: given $9$ distinct points on $E$, the blowup of $\mathbb{P}^2$ at these $9$ points is denoted by $S$, and the strict transform of $E$ is denoted by $C$.

The ``only if'' direction is a theorem of Ogawa \cite{ogawa}, while the ``if'' direction would be the analog of Yoccoz's Theorem \cite{yoccoz} in the Siegel linearization problem. This question is consistent with our \autoref{main}, since the set of elements in $\mathrm{Pic}^0(E)$ for which Brjuno's condition fails is a dense $G_\delta$.


	\subsection{Materialization of resonances}
		\label{ssec:Materialization-of-resonances}

\subsubsection{Materialization of Resonances}
	\label{sssec:Materialization-of-Resonances}
In the study of the single-variable Siegel linearization problem, a landmark theorem of Yoccoz \cite{yoccoz} shows that if the Brjuno condition fails for a given multiplier $\lambda\in S^1$, then the quadratic polynomial $f(z)=\lambda z+z^2$ is not linearizable, and furthermore there exists a sequence of periodic orbits approaching the fixed point at the origin.
Such a phenomenon is often referred to as ``materialization of resonances'', see e.g.  \cite{IP2}, \cite[\S 36.N]{arnold_book}.

In the setting of a germ of an elliptic curve in a surface $C\subset S$, a materialization of resonances would be the following (see \cite[\S 4.4]{arnold}).
In every neighborhood of $C$ in $S$, there exists another elliptic curve $C'$, which topologically (i.e. in a smooth tubular neighborhood) projects to an $n$-fold cover of $C$ for some $n\geq 1$.
Clearly this obstructs linearizability, since if an open neighborhood of $C$ in $S$ were biholomorphic to a neighborhood of the zero section in $N_{C/S}$, the elliptic curve $C'$ would give a section of $N^{\otimes n}_{C/S}$, which is impossible since $N_{C/S}$ has degree zero and is not torsion.

\subsubsection{No materialization of resonances in projective case}
	\label{sssec:No-materialization-of-resonances-in-projective-case}
The theorem of Yoccoz says that materialization of resonances explains all failures of linearizability in the family of quadratic polynomials.
One can ask if the same holds in the case of elliptic curves on surfaces, i.e. whether the materialization of resonances always happens when the embedding is not linearizable and the normal bundle is nontorsion.

In the case of a very general germ of a surface along an elliptic curve with non-torsion normal bundle, one can give a negative answer by using fixed points of holomorphic maps of the form $f(z)=\lambda z+O(z^2)$ with $\lambda$ on the unit circle which are known to be non-linearizable but with no materialization of resonances, via a standard suspension construction.
Such maps are constructed, for example, in \cite[Thm.~2]{Perez-Marco1993_Sur-les-dynamiques-holomorphes-non-linearisables-et-une-conjecture-de-V.-I.-Arnold}.
Nonetheless, such examples are expected to be exceptional (see \cite[1989-11]{Arnold2004_Arnolds-problems} and its comments in \S3), and indeed do not appear among quadratic polynomials by Yoccoz's theorem.
In fact, it appears to be open (see \cite[Thm.~11.12]{Milnor2006_Dynamics-in-one-complex-variable} and subsequent discussion) whether there exist rational maps of $\bP^1$ which are not linearizable at a fixed point and with no materialization of resonances.

Let us now explain why materialization of resonances \emph{never} occurs when $S$ is the blowup of $\bP^2$ at $9$ distinct points on a smooth cubic curve $E$ (with nontorsion normal bundle).
Indeed, suppose $C'$ is an elliptic curve in $S$ which projects to an $n$-fold cover of $C$ in a smooth tubular neighborhood of $C$ in $S$.
Denote by $p\colon S\to \bP^2$ the projection map.
Then $E':=p(C')$ is a plane curve of degree $3n$, which passes through the $9$ base points with multiplicity $n$.
The linear pencil generated by $nE$ and $E'$ is a Halphen pencil of index $n$, therefore the normal bundle of $E$ is torsion, which is a contradiction (see \autoref{neeman}).



	\subsection{Complete Calabi-Yau metrics}
		\label{ssec:Complete Calabi-Yau metrics}

Let $E\subset\bP^2$ be a smooth cubic, $S\to\bP^2$ the blowup at $9$ distinct points on $E$, and $C\subset S$ the strict transform of $E$. An interesting question to ask is:
\begin{question}
Does $S\backslash C$ always admit a complete Ricci-flat K\"ahler metric?
\end{question}

In the case when $N_{C/S}$ is trivial, such metrics exist thanks to the work of Hein \cite{hein}, which extended earlier work of Tian-Yau \cite{TY}. When $N_{C/S}$ is torsion but nontrivial, the same result holds by the work of Haskins-Hein-Nordstr\"om \cite{HHN}. It seems possible that the techniques in \cite{HHN} might also apply to the case when $N_{C/S}$ is nontorsion and linearizable, to show that in this case $S\backslash C$ also admits a complete Calabi-Yau metric. On the other hand, the last remaining case when $N_{C/S}$ is nontorsion and not linearizable (of which we now have many examples), appears to be very subtle and worthy of further investigation.



\bibliographystyle{sfilip_bibstyle}
\bibliography{linearization}

\end{document}